\newcommand{\calA}{\mathcal{A}}
\newcommand{\calM}{\mathcal{M}}
\newcommand{\calP}{\mathcal{P}}
\newcommand{\calW}{\mathcal{W}}
\newcommand{\mC}{\mathbb{C}}
\newcommand{\mD}{\mathbb{D}}
\newcommand{\mF}{\mathbb{F}}
\newcommand{\mN}{\mathbb{N}}
\newcommand{\mR}{\mathbb{R}}
\newcommand{\mS}{\mathbb{S}}
\newcommand{\mT}{\mathbb{T}}
\newcommand{\mZ}{\mathbb{Z}}
\newcommand{\inv}{{\textrm{inv }}}
\newcommand{\ind}{{\textrm{ind }}}
\newtheorem{theorem}{Theorem}[section]
\newtheorem{lemma}[theorem]{Lemma}
\newtheorem{corollary}[theorem]{Corollary}
\newtheorem{proposition}[theorem]{Proposition}
\theoremstyle{definition}
\newtheorem{remark}[theorem]{Remark}
\theoremstyle{definition}
\newtheorem{definition}[theorem]{Definition}
\theoremstyle{definition}
\begin{document}

\keywords{Nyquist criterion, control theory, Banach algebras}

\subjclass[msc2000]{Primary 93D15; Secondary  46J20}

\title[Abstract Nyquist criterion and applications]{An abstract Nyquist criterion \\containing old and new results}

 \author{Amol Sasane}
  \address{Mathematics Department,
    London School of Economics,
    Houghton Street, London WC2A 2AE,
    United Kingdom.}
  \email{A.J.Sasane@lse.ac.uk}

\begin{abstract}
  We prove an abstract Nyquist criterion in a general
  set up. As applications, we recover various versions of the Nyquist
  criterion, some of which are new.
\end{abstract}

\maketitle

\section{Introduction}

\noindent Harry Nyquist, in his fundamental paper \cite{Nyq}, gave a
criterion for the stability of a feedback system, which is one of the
basic tools in the frequency domain approach to feedback control.
This test, which is expressed in terms of the winding number around
zero of a certain curve in the complex plane, is well known for finite
dimensional systems; see for example \cite{Vid} or
Theorem~\ref{corollary_nyquist_for_disk_algebra} in this article.
There are several extensions of this test for other classes of systems
as well; see for example \cite{CalDes}, \cite{Dav}, \cite{DecMurSae}.
Thus the problem of obtaining a Nyquist criterion encompassing the
different transfer function classes of systems is a natural one; see
\cite{Log}, \cite[p.65]{Qualn}.

In this article, we will prove an ``abstract Nyquist theorem'', where
we only start with a commutative ring $R$ (thought of as the class of
stable transfer functions of a linear control system) possessing
certain properties, and then give a criterion for the stability of a
closed loop feedback system formed by a plant and a controller (which
have transfer functions that are matrices with entries from the field
of fractions of $R$). We then specialize $R$ to several classes of
stable transfer functions and obtain various versions of the Nyquist
criterion. In the section on applications, we have given references to
the known results; all other results seem to be new.

The article is organized as follows: 
\begin{enumerate}
\item In Section \ref{section_general_assumptions}, we describe the
  basic objects in our abstract set up in which we will prove our
  abstract Nyquist criterion. The starting point will be a commutative
  ring $R$. We will also give a systematic procedure to build the
  other basic objects starting from $R$ in teh case when $R$ is a
  Banach algebra.
\item In section \ref{section_fact_approach}, we will recall the
  standard definitions from the factorization approach to feedback
  control theory.
\item In Section \ref{section_abstract_Nyquist}, we prove our main
  result, the abstract Nyquist criterion, in Theorem~\ref{thm_main}.
\item Finally in various subsections of
  Section~\ref{section_applications}, we recover some old versions of
  the Nyquist criterion as well as obtain new ones, as special instances
  of our abstract Nyquist criterion. 
\end{enumerate}

\section{General setup and assumptions}
\label{section_general_assumptions}

\noindent Our set up is a triple $(R,S, \iota)$, satisfying the
following:
\begin{itemize}
\item[(A1)] $R$ be a unital commutative ring.
\item[(A2)] $S$ is a unital commutative Banach algebra such that
  $R\subset S$.  The invertible elements of $S$ will be denoted by
  $\inv S$.
\item[(A3)] There exists a map $\iota: \inv S \rightarrow G$, where
  $(G,\star)$ is an Abelian group with identity denoted by $\circ$, and
  $\iota$ satisfies
$$
\iota(ab)= \iota (a) \star \iota(b)\quad (a,b \in \inv S).
$$
The function $\iota$ will be called an {\em abstract index}.
\item[(A4)] $x\in R \cap (\inv S)$ is invertible as an element of $R$
  iff $\iota(x)=\circ$.
\end{itemize}

\noindent Typically, one has $R$ available. So the natural question
which arises is: How does one find $S$ and $\iota$ that satisfy
(A1)-(A4)? We outline a systematic procedure for doing this below when
$R$ is a commutative unital complex Banach algebra (or more generally
a full subring of such a Banach algebra; the definition of a full
subring is  recalled below).

\begin{definition}
  Let $R_1, R_2$ be commutative unital rings, and let $R_1$ be a
  subring of $R_2$. Then $R_1$ is said to be a {\em full} subring of $R_2$
  if for every $x\in R_1$ such that $x$ is invertible in $R_2$, there
  holds that $x$ is invertible in $R_1$.
\end{definition}

\subsection{A choice of $\iota$}
\label{subsection_iota}

If $\exp S$ denotes the connected component in $\inv S$ which contains
the identity element of $S$, then we can take $G$ as the (discrete)
group $(\inv S)/(\exp S)$, and $\iota$ can be taken to be the natural
homomorphism $\iota_S$ from $\inv S$ to $(\inv S)/(\exp S)$.  Then
(A3) holds; see \cite[Proposition 2.9]{Dou}.

\subsection{A choice of $S$}
\label{subsection_S}

On the other hand, one possible construction of an $S$ is as follows.
First we recall a definition from \cite{Moh}.

\begin{definition}
  Let $X_R$ denote the maximal ideal space of a unital commutative
  Banach algebra $R$. A closed subset $Y \subset X_R$ is said to
  satisfy the {\em generalized argument principle} for $R$ if whenever
  $a \in R$ and $\log \widehat{a} $ is defined continuously on $Y$,
  then $a$ is invertible in $R$. (Here $\widehat{a}$ denotes the
  Gelfand transform of $a$, $Y$ is equipped with the topology it
  inherits from $X_R$ and $X_R$ has the usual Gelfand topology).
\end{definition}

It was shown in \cite[Theorem~2.2]{Moh} that any $Y$ satisfying the
generalized argument principle is a boundary for $R$ and so it
contains the \v{S}ilov boundary of $R$. Moreover, given any $R$, there
always exists a minimal closed set $Y_R$ of $X_R$ which satisfies the
generalized argument principle for $R$ \cite[Theorem~2.7]{Moh}.

So if we know a set $Y\subset X_R$ that satisfies the generalized
argument principle for $R$, then one can take $S$ to be equal to
$S_Y:=C(Y)$.  The topology on
$C(Y)$ is the one given by the supremum norm.

\begin{lemma}
\label{lemma_gen_1}
Let $R$ be a commutative unital complex Banach algebra, and let
$Y\subset X_R$ satisfy the generalized argument principle for $R$. Let
$S:=S_Y$ and $\iota:=\iota_{S_Y}$ be as described in the previous two
subsections.  Let $f\in \textrm{\em inv }S$. Then $f$ has a continuous
logarithm iff $\iota(f)=\circ$. In particular the triple $(R,S,\iota)$
satisfies {\em (A1)-(A3)} and the `if' part of {\em (A4)}.
\end{lemma}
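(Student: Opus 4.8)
The plan is to unwind the definitions and reduce everything to the classical fact that in $C(Y)$ an invertible element has a continuous logarithm precisely when it lies in the connected component of the identity of $\inv C(Y)$. First I would recall that $S = S_Y = C(Y)$ with the supremum norm, that $\exp S$ is by definition the connected component of $\inv S$ containing $1$, and that $G = (\inv S)/(\exp S)$ with $\iota = \iota_S$ the quotient map. So $\iota(f) = \circ$ if and only if $f \in \exp S$.

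The substantive step is therefore: for $f \in \inv C(Y)$, $f$ has a continuous logarithm on $Y$ if and only if $f \in \exp C(Y)$. For the direction ``$\Rightarrow$'': if $f = e^{g}$ with $g \in C(Y)$, then $t \mapsto e^{tg}$, $t \in [0,1]$, is a continuous path in $\inv C(Y)$ joining $1$ to $f$, so $f$ lies in the connected component of the identity, i.e.\ $f \in \exp C(Y)$, hence $\iota(f) = \circ$. For the direction ``$\Leftarrow$'': one uses the standard Banach-algebra fact that $\exp S$ coincides with the set of finite products $e^{a_1}\cdots e^{a_n}$ of exponentials of elements of $S$ (see \cite[Proposition~2.9]{Dou}, already cited in the paper for (A3)); in the commutative algebra $C(Y)$ such a product equals $e^{a_1 + \cdots + a_n}$, so every element of $\exp C(Y)$ is a single exponential and thus has a continuous logarithm. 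This gives the claimed equivalence, and combined with the identification $\iota(f) = \circ \iff f \in \exp S$ it proves the first assertion of the lemma.

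For the last sentence, (A1) is the hypothesis that $R$ is a (commutative unital) Banach algebra, hence a ring; (A2) holds because $C(Y)$ is a unital commutative Banach algebra and, since $Y$ is a boundary for $R$ (in particular $Y$ contains the \v{S}ilov boundary, by \cite[Theorem~2.2]{Moh}), the Gelfand map $a \mapsto \widehat a|_Y$ embeds $R$ into $C(Y)$ isometrically enough to regard $R \subset S$; (A3) is exactly the content of \cite[Proposition~2.9]{Dou} as recorded in Subsection~\ref{subsection_iota}. Finally the `if' part of (A4) says: if $x \in R \cap \inv S$ and $\iota(x) = \circ$, then $x$ is invertible in $R$ — but $\iota(x) = \circ$ means $x$ has a continuous logarithm on $Y$ by the equivalence just proved, i.e.\ $\log \widehat x$ is defined continuously on $Y$, so invertibility of $x$ in $R$ is precisely the defining property of the generalized argument principle that $Y$ is assumed to satisfy.

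I expect the only real point requiring care is the passage, in the ``$\Leftarrow$'' direction, from ``finite product of exponentials'' to ``single exponential''; this is immediate from commutativity of $C(Y)$, but it is the step where one genuinely invokes that $S$ is the \emph{commutative} algebra $C(Y)$ rather than a general Banach algebra. Everything else is bookkeeping: matching the definition of $\iota$ to connectedness, and matching ``$\log\widehat x$ continuous on $Y$'' to the hypothesis defining the generalized argument principle. Note that the `only if' part of (A4) is deliberately \emph{not} claimed here, since it need not follow from the generalized argument principle alone.
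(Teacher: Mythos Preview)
Your proposal is correct and follows essentially the same approach as the paper's proof. The only difference is that you spell out more carefully the key step the paper takes for granted, namely that in the commutative Banach algebra $C(Y)$ the connected component $\exp S$ coincides with the range of the exponential map (via \cite[Proposition~2.9]{Dou} plus commutativity), whereas the paper simply asserts that $\iota(f)=\circ$ iff $f=e^g$ ``by the definition of $\iota$''.
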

\begin{proof} Suppose that $f$ has a continuous logarithm. Then
  $f=e^g$ for some $g\in C(S)$. But then by the definition of $\iota$,
  $\iota(f)=\circ$.

  Conversely, suppose that $\iota(f)=\circ$. This means that $f=e^{g}$
  for some $g\in C(S)$. Hence $f$ has a continuous logarithm.

  (A1) is trivial. Given $f\in R$, we see that $\widehat{f}|_Y \in
  C(Y)$. Moreover the map $f\mapsto \widehat{f}|_Y$ is one-to-one
  since $Y$ contains the \v{S}ilov boundary of $R$. Indeed if
  $\widehat{f}|_Y=0$, then we have
$$
\max_{\varphi \in X_R} |\widehat{f}(\varphi)|=\max_{\varphi \in Y}
|\widehat{f}(\varphi)|=0,
$$ 
and so $\widehat{f}\equiv 0$, that is $f=0$. Hence (A2) holds as well.
(A3) follows from the definition of $\iota$. Finally we show (A4)
below.

Suppose that $f\in R \cap \inv S$.  If $\iota(f)=\circ$, then we know
that $f$ has a continuous logarithm on $Y$. But $Y$ satisfies the
generalized argument principle for $R$.  Thus $f$ is invertible as an
element of $R$.
\end{proof}

For the `only if' part, we will need a stronger property on $Y$ than
the generalized argument principle.

\begin{definition}
A closed subset $Y \subset X_R$ is said to satisfy the {\em strong
  generalized argument principle} for $R$ if $a \in R$ is invertible
as an element in $R$ iff $\log \widehat{a} $ is defined continuously
on $Y$.
\end{definition}

\begin{lemma}
Let $R$ be a commutative unital complex Banach algebra, and let
$Y\subset X_R$ satisfy the strong generalized argument principle for
$R$. Let $S:=S_Y$ and $\iota:=\iota_{S_Y}$ be as described in the
previous subsection.   Then  the triple
$(R,S,\iota)$ satisfies {\em (A1)-(A4)}.
\end{lemma}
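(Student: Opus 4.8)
The plan is to reduce the statement to the single implication not yet covered, and then to obtain that implication directly from the strengthened hypothesis on $Y$. First observe that any closed set satisfying the strong generalized argument principle for $R$ a fortiori satisfies the (ordinary) generalized argument principle, so Lemma~\ref{lemma_gen_1} applies: the triple $(R, S_Y, \iota_{S_Y})$ already satisfies (A1), (A2), (A3) and the `if' half of (A4), i.e.\ $\iota(x)=\circ \Rightarrow x$ is invertible in $R$ for $x\in R\cap\inv S$. Lemma~\ref{lemma_gen_1} also furnishes the tool I will use, namely that $f\in\inv S$ has a continuous logarithm if and only if $\iota(f)=\circ$. So the only thing left to check is: if $x\in R\cap \inv S$ is invertible in $R$, then $\iota(x)=\circ$.

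To prove this I would argue as follows. Throughout, $R$ is regarded as a subring of $S=C(Y)$ via the injective unital homomorphism $f\mapsto \widehat f|_Y$ from the proof of Lemma~\ref{lemma_gen_1}, so $\iota(x)$ means $\iota(\widehat x|_Y)$. Assume $x$ is invertible in $R$. By the definition of the strong generalized argument principle, invertibility of $x$ in $R$ is equivalent to $\log\widehat x$ being defined continuously on $Y$, i.e.\ to $\widehat x|_Y$ possessing a continuous logarithm in $C(Y)$. Feeding $f=\widehat x|_Y$ into the logarithm characterization from Lemma~\ref{lemma_gen_1} then yields $\iota(\widehat x|_Y)=\circ$, that is $\iota(x)=\circ$. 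Combined with the preceding paragraph, (A1)--(A4) all hold.

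There is no genuinely hard step: the entire weight of the argument rests on the previous lemma together with the strengthened hypothesis on $Y$. The only thing demanding a little care is the bookkeeping — keeping the identification of elements of $R$ with their Gelfand transforms restricted to $Y$ consistent, and lining up the three equivalent assertions ``$x$ invertible in $R$'', ``$\log\widehat x$ is defined continuously on $Y$'', and ``$\widehat x|_Y$ has a continuous logarithm'', where the first equivalence is exactly the strong generalized argument principle and the last is the content of Lemma~\ref{lemma_gen_1}.
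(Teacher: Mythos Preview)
Your proof is correct and follows essentially the same approach as the paper: invoke Lemma~\ref{lemma_gen_1} for (A1)--(A3) and the `if' half of (A4), then for the `only if' half use the strong generalized argument principle to pass from invertibility of $x$ in $R$ to the existence of a continuous logarithm of $\widehat x|_Y$, and finally apply the logarithm characterization from Lemma~\ref{lemma_gen_1} to conclude $\iota(x)=\circ$. Your version is a bit more explicit about the identification $R\hookrightarrow C(Y)$ and about why the strong principle implies the ordinary one, but the argument is the same.
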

\begin{proof} (A1)-(A3) and the `if' part of (A4) have been verified
already in Lemma~\ref{lemma_gen_1}. We just verify the `only if'
part of (A4). So suppose that $f\in R \cap \inv C(Y)$ and that $f$ is
invertible as an element of $R$. Then $f$ has a continuous logarithm
on $Y$, and so $\iota(f)=\circ$, again by Lemma~\ref{lemma_gen_1}.
\end{proof}

In Subsection \ref{subsection_disk_algebra} and \ref{subsection_AP},
in the case of the disk algebra $A(\mD)$ and the analytic almost
periodic algebra $AP^+$, we will see that our choices of $S$ and
$\iota$ are precisely of the type described above.

\section{Feedback stabilization}
\label{section_fact_approach}

\noindent We recall the following definitions from the factorization
approach to control theory.

\begin{definition}
  The field of fractions of $R$ will be denoted by $\mF(R)$. Let $P
  \in (\mF(R))^{p\times m}$ and let $P=ND^{-1}$, where $N,D$ are
  matrices with entries from $R$. Here $D^{-1}$ denotes a matrix with
  entries from $\mF(R)$ such that $DD^{-1}=D^{-1} D=I$. The
  factorization $P=ND^{-1}$ is called a {\em right coprime
    factorization of} $P$ if there exist matrices $X, Y$ with entries
  from $R$ such that $X N + Y D=I_m$.  Similarly, a factorization
  $P=\widetilde{D}^{-1}\widetilde{N}$, where
  $\widetilde{N},\widetilde{D}$ are matrices with entries from $R$, is
  called a {\em left coprime factorization of} $P$ if there exist
  matrices $\widetilde{X}, \widetilde{Y}$ with entries from $R$ such
  that $\widetilde{N} \widetilde{X}+\widetilde{D} \widetilde{Y}=I_p$.
  Given $P \in (\mF(R))^{p\times m}$ with right and left
  factorizations
$$
P=N D^{-1}
\quad \textrm{and} \quad
P= \widetilde{D}^{-1} \widetilde{N},
$$
respectively, we introduce the following matrices with entries from
$R$:
$$
G_P=\left[ \begin{array}{cc} N \\ D \end{array} \right]
\quad \textrm{and} \quad
\widetilde{G}_P=\left[ \begin{array}{cc}
-\widetilde{N}& \widetilde{D}  \end{array} \right] .
$$
We denote by $\mS(R,p, m)$ the set of all $P\in (\mF(R))^{p\times m}$
that possess a right coprime factorization and a left coprime
factorization.
\end{definition}

Given $P \in (\mF(R))^{p\times m}$ and $C\in (\mF(R))^{m\times p}$,
define the {\em closed loop transfer function}
$$
H(P,C):= \left[\begin{array}{cc} P \\ I \end{array} \right]
(I-CP)^{-1} \left[\begin{array}{cc} -C & I \end{array} \right] \in
(\mF(R))^{(p+m)\times (p+m)}.
$$
$C$ is said to {\em stabilize} $P$ if $H(P,C) \in R^{(p+m)\times
  (p+m)}$, and $P$ is called {\em stabilizable} if $ \{ C \in
(\mF(R))^{m\times p} : H(P,C) \in R^{(p+m)\times (p+m)}\} \neq
\emptyset$.  If $ P\in \mS(R,p,m)$, then $P$ is a {\em stabilizable};
see for example \cite[Chapter 8]{Vid}. Thus
$$
\mS(R,p,m)
= \left\{ P\in (\mF(R))^{p\times m} \bigg|
\begin{array}{ll}
   \exists C\in (\mF(R))^{m\times p} \textrm{ such that} \\
   H(P,C)\in R^{(p+m)\times (p+m)}
\end{array}
\right\}.
$$
It was shown in \cite[Theorem 6.3]{Qua} that if the ring $R$ is
projective free, then every stabilizable $P$ admits a right coprime
factorization and a left coprime factorization.

We will use the following in order to prove our main result in the
next section.

\begin{lemma}
\label{lemma_fth}
Suppose that $F\in R^{m\times m}$. Then $F$ is invertible as an
element of $ R^{m\times m}$ iff $\det F \in \textrm{\em inv } S$ and
$\iota( \det F)=\circ$.
\end{lemma}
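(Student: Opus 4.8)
The plan is to prove the equivalence by reducing the invertibility of the matrix $F\in R^{m\times m}$ to the invertibility of the scalar $\det F\in R$, and then applying axiom (A4) together with the multiplicativity of $\iota$. The key bridge is the classical adjugate (Cramer) formula: over any commutative unital ring $R$ one has $F\,(\operatorname{adj}F)=(\operatorname{adj}F)\,F=(\det F)\,I_m$, where $\operatorname{adj}F\in R^{m\times m}$. Consequently, $F$ is invertible in $R^{m\times m}$ if and only if $\det F$ is invertible in $R$: if $(\det F)^{-1}\in R$ then $(\det F)^{-1}\operatorname{adj}F$ is a two-sided inverse of $F$, and conversely if $F^{-1}\in R^{m\times m}$ then taking determinants in $FF^{-1}=I_m$ gives $(\det F)(\det F^{-1})=1$, so $\det F\in\operatorname{inv}R$.

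First I would record this reduction: $F$ is invertible in $R^{m\times m}$ iff $\det F$ is invertible in $R$. Next I would invoke axiom (A4), which applies to the element $x=\det F$ provided we know $\det F\in R\cap(\operatorname{inv}S)$: it says that such an $x$ is invertible in $R$ iff $\iota(x)=\circ$. So the forward direction is immediate — if $F$ is invertible in $R^{m\times m}$, then $\det F$ is invertible in $R$, hence (viewing $\det F$ inside $S$ via $R\subset S$) $\det F\in\operatorname{inv}S$ and, by (A4), $\iota(\det F)=\circ$. For the reverse direction, the hypotheses $\det F\in\operatorname{inv}S$ and $\iota(\det F)=\circ$ are exactly what (A4) needs to conclude $\det F$ is invertible in $R$, whence $F$ is invertible in $R^{m\times m}$ by the adjugate argument.

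The one subtlety — the main obstacle, such as it is — is bookkeeping about where $\det F$ lives. In the statement, the phrase ``$\det F\in\operatorname{inv}S$'' presupposes we have already placed $\det F$ in $S$, which is legitimate because $R\subset S$ as rings (by (A2)), so $\det F\in R\subset S$; and the determinant is computed the same way in $R$ and in $S$ since the inclusion is a ring homomorphism fixing $R$ entrywise. Thus ``invertible in $R$'' (the hypothesis of (A4)) and ``invertible in $R^{m\times m}$'' (the conclusion we want) are linked by the adjugate formula, while ``$\det F\in\operatorname{inv}S$ and $\iota(\det F)=\circ$'' is precisely the right-hand side of (A4). I would simply assemble these two equivalences:
$$
F\in\operatorname{inv}(R^{m\times m})
\iff \det F\in\operatorname{inv}R
\iff \bigl(\det F\in\operatorname{inv}S\ \text{and}\ \iota(\det F)=\circ\bigr),
$$
the first by Cramer's rule and the second by (A4). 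No deeper machinery is needed; the whole content is the adjugate identity plus (A4).
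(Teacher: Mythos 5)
Your proof is correct and follows essentially the same route as the paper's: reduce invertibility of $F$ in $R^{m\times m}$ to invertibility of $\det F$ in $R$ via Cramer's rule (the adjugate identity), then apply (A4). The extra bookkeeping you add about viewing $\det F$ inside $S$ via the inclusion $R\subset S$ is a reasonable clarification but not a different argument.
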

\begin{proof} Using Cramer's rule, we see that $F$ is invertible as an element of
$R^{m\times m}$ iff $\det F$ is invertible as an element of $R$. The
result now follows from (A4).
\end{proof}

\section{Abstract Nyquist criterion}
\label{section_abstract_Nyquist}

\begin{theorem}
\label{thm_main} 
Let $(${\em A}$1)$-$(${\em A}$4)$ hold. Suppose that $P\in \mS(R, p,
m)$ and that $C \in \mS(R, m, p)$. Moreover, let $P=N_P D_P^{-1}$ be a
right coprime factorization of $P$, and let $C=\widetilde{D}_C^{-1}
\widetilde{N}_C $ be a left coprime factorization of $C$. Then the
following are equivalent:
\begin{enumerate}
\item $C$ stabilizes $P$.
\item
\begin{enumerate}
\item $\det (I-CP), \det D_P, \det \widetilde{D}_C \in \textrm{\em inv
  } S$ and
\item $\iota(\det (I-CP))\star \iota(\det D_P) \star \iota(\det
  \widetilde{D}_C)=\circ$.
\end{enumerate}
\end{enumerate}
\end{theorem}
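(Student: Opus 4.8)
The plan is to reduce assertion (1) to the invertibility of a single square matrix over $R$ and then run that through Lemma~\ref{lemma_fth}. Set
$$
G_P=\left[\begin{array}{c} N_P\\ D_P\end{array}\right],\qquad \widetilde{G}_C=\left[\begin{array}{cc} -\widetilde{N}_C & \widetilde{D}_C\end{array}\right],\qquad \Delta:=\widetilde{G}_C\,G_P=\widetilde{D}_C D_P-\widetilde{N}_C N_P\in R^{m\times m}.
$$
Substituting $P=N_P D_P^{-1}$ and $C=\widetilde{D}_C^{-1}\widetilde{N}_C$ one checks directly that $I-CP=\widetilde{D}_C^{-1}\Delta D_P^{-1}$, that the leftmost factor of $H(P,C)$ equals $G_P D_P^{-1}$, that the rightmost factor equals $\widetilde{D}_C^{-1}\widetilde{G}_C$, and that the middle factor is $(I-CP)^{-1}=D_P\Delta^{-1}\widetilde{D}_C$; the inner terms cancel and
$$
H(P,C)=G_P\,\Delta^{-1}\,\widetilde{G}_C .
$$

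The first thing I would prove is that \emph{$C$ stabilizes $P$ if and only if $\Delta$ is invertible as an element of $R^{m\times m}$}. The ``if'' implication is immediate from the displayed formula, since $G_P$, $\widetilde{G}_C$ and $\Delta^{-1}$ are matrices over $R$. For ``only if'' I would invoke the B\'ezout identities contained in the coprime factorizations: from the right coprime factorization of $P$ one gets $R$-matrices with $X_P N_P+Y_P D_P=I_m$, so $G_P$ has a left inverse over $R$, and from the left coprime factorization of $C$ one gets $R$-matrices with $\widetilde{N}_C\widetilde{X}_C+\widetilde{D}_C\widetilde{Y}_C=I_m$, so $\widetilde{G}_C$ has a right inverse over $R$. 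Multiplying $H(P,C)=G_P\Delta^{-1}\widetilde{G}_C$ on the left by that left inverse and on the right by that right inverse recovers $\Delta^{-1}$ as a product of matrices over $R$, so $\Delta$ is invertible over $R$.

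Next I would take determinants. From the factorization $\Delta=\widetilde{D}_C\,(I-CP)\,D_P$ of square matrices over $\mF(R)$ we obtain
$$
\det\Delta=\det\widetilde{D}_C\cdot\det(I-CP)\cdot\det D_P
$$
in $\mF(R)$, and by Lemma~\ref{lemma_fth} the matrix $\Delta$ is invertible over $R$ precisely when $\det\Delta\in\inv S$ and $\iota(\det\Delta)=\circ$. Combining these with the homomorphism property (A3) of $\iota$ should finish the proof: once $\det\widetilde{D}_C$, $\det(I-CP)$ and $\det D_P$ all lie in the Abelian group $\inv S$, the determinant identity holds inside $\inv S$, so $\det\Delta\in\inv S$ and
$$
\iota(\det\Delta)=\iota(\det\widetilde{D}_C)\star\iota(\det(I-CP))\star\iota(\det D_P);
$$
since $G$ is Abelian this is $\circ$ exactly when the product in (2)(b) is $\circ$. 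Reading the resulting chain of equivalences in both directions yields (1)$\Leftrightarrow$(2).

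The step I expect to be the main obstacle is reconciling the three membership conditions in (2)(a) with the single condition $\det\Delta\in\inv S$ produced by Lemma~\ref{lemma_fth}: a priori one controls only the product $\det\Delta=\det\widetilde{D}_C\cdot\det(I-CP)\cdot\det D_P$, not the individual factors, and $\det(I-CP)=\det\Delta\cdot(\det\widetilde{D}_C\cdot\det D_P)^{-1}$ lives a priori only in $\mF(R)$, whereas $\det D_P$ and $\det\widetilde{D}_C$ are honest elements of $R\subseteq S$. The crux is thus to show that invertibility of $\Delta$ over $R$ is compatible with, and propagates to, simultaneous invertibility in $S$ of $\det D_P$, $\det\widetilde{D}_C$ and $\det(I-CP)$, and then to carry the index identity along the same relation using (A3). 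This is the point at which the structure of the triple $(R,S,\iota)$ and the coprimeness of the given factorizations are used in an essential way; everything else is routine manipulation of the identity $\det\Delta=\det\widetilde{D}_C\cdot\det(I-CP)\cdot\det D_P$ together with the Abelian group law of $G$.
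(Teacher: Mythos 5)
Your decomposition and plan coincide with the paper's: reduce stabilization to invertibility of $\Delta=\widetilde{G}_C G_P$ over $R$ (using the one-sided inverses of $G_P$ and $\widetilde{G}_C$ supplied by coprimeness), apply Lemma~\ref{lemma_fth} to $\Delta$, and then distribute the resulting condition on $\det\Delta$ across the factorization $\det\Delta=\det\widetilde{D}_C\cdot\det(I-CP)\cdot\det D_P$ via (A3). You correctly flag the last distribution step as the ``main obstacle,'' and you put your finger on exactly the right spot: $\det(I-CP)$ lives a priori only in $\mF(R)$, not in $S$, so $\det\Delta\in\inv S$ does not obviously give (2)(a). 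The paper's proof does not resolve this either; it simply writes ``$(\det\widetilde{D}_C)\cdot(\det(I-CP))\cdot(\det D_P)\in\inv S$. Hence $\det\widetilde{D}_C$, $\det(I-CP)$, $\det D_P$ are each invertible elements of $S$,'' which is precisely the inference you hesitated over, asserted without justification.

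That inference is in fact false, so the implication (1)$\Rightarrow$(2)(a) fails as stated, and you should not hope to rescue it from the structure of $(R,S,\iota)$ or from coprimeness. Take the paper's own instance $R=A(\mD)$, $S=C(\mT)$, $\iota={\tt w}$ (Subsection~\ref{subsection_disk_algebra}), with scalar data $P=1/(1-z)$, $N_P=1$, $D_P=1-z$ (coprime via $1\cdot N_P+0\cdot D_P=1$) and $C=-z$, $\widetilde{D}_C=1$, $\widetilde{N}_C=-z$. Then $\widetilde{G}_C G_P=\widetilde{D}_C D_P-\widetilde{N}_C N_P=(1-z)+z=1$, so $C$ stabilizes $P$; yet $\det D_P=1-z$ vanishes at $z=1\in\mT$, hence $\det D_P\notin\inv C(\mT)$, and $\det(I-CP)=1/(1-z)$ is not even in $C(\mT)$. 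In a commutative ring, $abc$ a unit forces each of $a,b,c$ to be a unit only when all three already lie in that ring, and here the middle factor does not. The direction (2)$\Rightarrow$(1), which you and the paper both handle by reassembling $\det\Delta$ from the three units of $S$ and invoking (A4), is sound; but (1)$\Rightarrow$(2) requires an extra hypothesis such as $\det D_P,\det\widetilde{D}_C\in\inv S$ (the classical ``no poles of $P$ or $C$ on the boundary''), under which $\det(I-CP)=\det\Delta\cdot(\det D_P\det\widetilde{D}_C)^{-1}\in\inv S$ follows and your argument closes. As written, both your proposal and the paper's proof have this gap.
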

\begin{proof} We note that
\begin{eqnarray*}
H(P,C)&=& \left[\begin{array}{cc} P \\ I \end{array} \right]
(I-CP)^{-1} \left[\begin{array}{cc} -C & I \end{array} \right]
\\
&=&
\left[\begin{array}{cc}N_P D_P^{-1} \\ I \end{array} \right]
(I-\widetilde{D}_C^{-1} \widetilde{N}_C N_P D_P^{-1})^{-1}
\left[\begin{array}{cc} -\widetilde{D}_C^{-1} \widetilde{N}_C& I \end{array} \right]
\\
&=&
\left[\begin{array}{cc}N_P  \\ D_P \end{array} \right]
(\widetilde{D}_C D_P -\widetilde{N}_C N_P )^{-1}
\left[\begin{array}{cc} - \widetilde{N}_C&  \widetilde{D}_C\end{array} \right]
\\
&=& G_P (\widetilde{G}_C G_P)^{-1} \widetilde{G}_C.
\end{eqnarray*}
So if $(\widetilde{G}_C G_P)^{-1} \in R^{p \times p}$, then $H(P,C)\in
R^{(p+m)\times (p+m)}$ . Conversely, using the fact that there exist
matrices $\Theta$ and $\widetilde{\Theta}$ with $R$ entries such that
$\Theta G_P=I$ and $\widetilde{G}_C\widetilde{\Theta}=I$, it follows
from the above that if $H(P,C)\in R^{(p+m)\times (p+m)}$, then
$(\widetilde{G}_C G_P)^{-1} \in R^{p \times p}$. So $C$ stabilizes $P$
iff $(\widetilde{G}_C G_P)^{-1} \in R^{p \times p}$. We will use this
fact below.

\medskip

\noindent (1)$\Rightarrow$(2): Suppose that $C$ stabilizes $P$. Then
$(\widetilde{G}_C G_P)^{-1} \in R^{p \times p}$. So $\det
(\widetilde{G}_C G_P)$ is invertible as an element of $R$. By (A4), it
follows that $\det (\widetilde{G}_C G_P)$ is invertible as an element
of $S$ and $\iota(\det (\widetilde{G}_C G_P))=\circ$. But
$$
 \widetilde{G}_C G_P =  \widetilde{D}_C D_P -\widetilde{N}_C N_P  =
\widetilde{D}_C(I-CP) D_P.
$$
Thus $\det (\widetilde{G}_C G_P) = (\det \widetilde{D}_C) \cdot (\det
(I-CP))\cdot (\det D_P)$ and so $(\det \widetilde{D}_C) \cdot (\det
(I-CP))\cdot (\det D_P)\in \inv S$. Hence  $\det
\widetilde{D}_C$, $\det (I-CP)$, $\det D_P$ are each invertible
elements of $S$. From (A3) we obtain 
$$
\circ=\iota( \det
(\widetilde{G}_C G_P) ) = \iota(\det \widetilde{D}_C)\star \iota
(\det(I-CP)) \star \iota(\det D_P).
$$

\medskip

\noindent (2)$\Rightarrow$(1): Suppose that $\det (I-CP), \det D_P,
\det \widetilde{D}_C \in \inv  S$ and that 
$$
\iota(\det (I-CP))\star \iota(\det D_P)\star  \iota(\det \widetilde{D}_C)=\circ.
$$
Then retracing the above steps in the reverse order, we see that $\det
(\widetilde{G}_C G_P)$ is invertible in $S$, and moreover,
$$
\iota( \det (\widetilde{G}_C G_P) ) =
\iota(\det \widetilde{D}_C)\star  \iota (\det(I-CP)) \star\iota(\det D_P)=\circ.
$$
From (A4) it follows that $\det (\widetilde{G}_C G_P)$ is invertible
as an element of $R$. Thus $\widetilde{G}_C G_P$ is invertible as an
element of $R^{p\times p}$. Consequently $C$ stabilizes $P$.
\end{proof}

\section{Applications}
\label{section_applications}

Now we specialize $R$ to several classes of stable transfer functions
and obtain various versions of the Nyquist criterion. In particular,
we begin with Subsection~\ref{subsection_disk_algebra}, where we
recover the classical Nyquist criterion.

\subsection{The disk algebra}
\label{subsection_disk_algebra}
Let
$$
\mD:= \{ z\in \mC: |z| <1\},\quad 
\overline{\mD}:= \{ z\in \mC: |z| \leq 1\}, \quad 
\mT:= \{ z\in \mC: |z|=1\}.
$$
The {\em disk algebra} $A(\mD)$ is the set of all functions $f:
\overline{\mD} \rightarrow \mC$ such that $f$ is holomorphic in $\mD$
and continuous on $\overline{\mD}$.  Let $C(\mT)$ denote the set of
complex-valued continuous functions on the unit circle $\mT$. For each
$f\in \inv C(\mT)$, we can define the {\em winding number} ${\tt
  w}(f)\in \mZ$ of $f$ as follows:
$$
{\tt w}(f)= \frac{1}{2\pi}(\Theta(2\pi)- \Theta(0)),
$$
where $\Theta:[0,2\pi] \rightarrow \mR$ is a continuous function such
that
$$
f(e^{it}) =|f(e^{it})| e^{i \Theta(t)}, \quad t \in [0,2\pi].
$$
The existence of such a $\Theta$ can be proved; see \cite[Lemma
4.6]{Ull}. Also, it can be checked that ${\tt w}$ is well-defined and
integer-valued. Geometrically, ${\tt w}(f)$ is the number of times the
curve $t \mapsto f(e^{it}): [0,2\pi] \rightarrow \mC$ winds around the
origin in a counterclockwise direction.  Also, \cite[Lemma
4.6.(ii)]{Ull} shows that the map ${\tt w}: \inv C(\mT) \rightarrow
\mR$ is locally constant. Here the local constancy of ${\tt w}$ means
continuity relative to the discrete topology on $\mR$, while $C(\mT)$
is equipped with the usual $\sup$-norm. 

\begin{lemma}
  Let
\begin{eqnarray*}
  R&=& \textrm{\em a unital full subring of }A(\mD),\\
  S&:=& C(\mT), \\
  G&:=& \mZ, \\
  \iota&:=&{\tt w}.
\end{eqnarray*}
Then {\em (A1)-(A4)} are satisfied.
\end{lemma}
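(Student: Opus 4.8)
The plan is to verify the four axioms (A1)–(A4) in turn, using the general theory of Subsection~\ref{subsection_iota} together with the specific facts about the winding number that were just recalled. Axiom (A1) is immediate: a unital full subring $R$ of $A(\mathbb{D})$ is by hypothesis a unital commutative ring. For (A2), the algebra $S = C(\mathbb{T})$ with the sup-norm is a unital commutative complex Banach algebra, and the inclusion $R \subset S$ comes from restriction of functions to the boundary $\mathbb{T}$; one should check this restriction map is injective, which follows because a function in $A(\mathbb{D})$ that vanishes on $\mathbb{T}$ vanishes on $\overline{\mathbb{D}}$ by the maximum modulus principle, and so the composite $R \hookrightarrow A(\mathbb{D}) \hookrightarrow C(\mathbb{T})$ is one-to-one.

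For (A3), I would argue that $\mathtt{w}\colon \inv C(\mathbb{T}) \to \mathbb{Z}$ is a group homomorphism with respect to multiplication in $\inv C(\mathbb{T})$ and addition in $\mathbb{Z}$. This is the standard multiplicativity of winding numbers: if $f = |f| e^{i\Theta_f}$ and $g = |g| e^{i\Theta_g}$ with $\Theta_f, \Theta_g$ continuous, then $fg = |fg| e^{i(\Theta_f + \Theta_g)}$ and $\Theta_f + \Theta_g$ is a valid continuous argument for $fg$, whence $\mathtt{w}(fg) = \mathtt{w}(f) + \mathtt{w}(g)$; this uses that $\mathtt{w}$ is well-defined independently of the chosen continuous argument, as already noted in the excerpt via \cite[Lemma 4.6]{Ull}. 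Thus $(G,\star) = (\mathbb{Z}, +)$ with identity $\circ = 0$, and $\iota = \mathtt{w}$ satisfies (A3).

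The substance is in (A4), which asserts that for $x \in R \cap \inv C(\mathbb{T})$, $x$ is invertible in $R$ if and only if $\mathtt{w}(x) = 0$. The forward direction is easy: if $x^{-1} \in R \subset C(\mathbb{T})$, then $1 = \mathtt{w}(1) = \mathtt{w}(x) + \mathtt{w}(x^{-1})$ forces both winding numbers to be zero since $\mathtt{w}$ is integer-valued. The hard part will be the converse: given $x \in R$, with $x$ nonvanishing on $\mathbb{T}$ and $\mathtt{w}(x) = 0$, show $x$ is invertible in $R$. Since $R$ is a \emph{full} subring of $A(\mathbb{D})$, it suffices to show $x$ is invertible in $A(\mathbb{D})$, i.e.\ that $x$ is nonvanishing on all of $\overline{\mathbb{D}}$ (for then $1/x \in A(\mathbb{D})$). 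The argument: $x \in A(\mathbb{D})$ is holomorphic on $\mathbb{D}$ and nonvanishing on $\mathbb{T}$, so its zeros in $\mathbb{D}$ form a finite set, and by the argument principle the number of these zeros, counted with multiplicity, equals the winding number of $t \mapsto x(e^{it})$ about the origin, which is $\mathtt{w}(x) = 0$. Hence $x$ has no zeros in $\overline{\mathbb{D}}$, so $1/x \in A(\mathbb{D})$, and fullness of $R$ gives $1/x \in R$. A minor technical point to handle carefully is that $x$ could a priori have infinitely many zeros accumulating at $\mathbb{T}$, but nonvanishing on the compact set $\mathbb{T}$ rules this out: the zero set of $x$ restricted to a slightly smaller closed disk is compact and discrete, hence finite.
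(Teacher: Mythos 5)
Your verification of (A1)--(A3) is correct, and your handling of the converse (``if'') direction of (A4) is essentially the paper's argument: reduce to showing that $x$ is nonvanishing on $\overline{\mD}$, note that nonvanishing on $\mT$ forces the zero set in $\mD$ to be finite, and invoke the argument principle. (The paper is a bit more careful here: since $x\in A(\mD)$ need not extend holomorphically across $\mT$, one cannot literally apply the classical argument principle on $\mT$; the paper instead applies it to the dilations $f_r(z)=f(rz)$, which \emph{are} holomorphic in a neighborhood of $\overline{\mD}$, and then uses local constancy of $\mathtt{w}$ to pass to the limit $r\nearrow 1$. You should make this precise, but it is a cosmetic gap.)

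The genuine gap is in the forward (``only if'') direction of (A4). From $\mathtt{w}(x)+\mathtt{w}(x^{-1})=\mathtt{w}(1)=0$ it does \emph{not} follow that $\mathtt{w}(x)=\mathtt{w}(x^{-1})=0$: any pair of integers $(n,-n)$ sums to zero, so multiplicativity of $\mathtt{w}$ together with integer-valuedness is insufficient. (Indeed, $z$ and $z^{-1}$ in $\inv C(\mT)$ have winding numbers $1$ and $-1$; the point is that $z^{-1}\notin A(\mD)$, but nothing in your argument uses membership in $A(\mD)$ at this stage.) What is really needed is the additional fact that $\mathtt{w}(g)\geq 0$ for every $g\in A(\mD)\cap\inv C(\mT)$ (because $\mathtt{w}(g)$ counts zeros of $g$ in $\mD$), and this is exactly the argument-principle content you deferred to the converse direction --- so the forward direction is not ``easy'' and independent of it, as your writeup suggests. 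The paper avoids this entirely: if $x$ is invertible in $R\subset A(\mD)$ then $x$ has no zeros in $\overline{\mD}$, so the argument principle applied to each $f_r$ gives $\mathtt{w}(f_r)=0$, and then $\|f_r-f\|_\infty\to 0$ together with local constancy of $\mathtt{w}$ yields $\mathtt{w}(f)=\lim_{r\to1}\mathtt{w}(f_r)=0$. You should either adopt that argument or explicitly prove and invoke the inequality $\mathtt{w}(g)\geq 0$ on $A(\mD)\cap\inv C(\mT)$.

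Also, a small typo: $\mathtt{w}(1)=0$, not $1$, in the line ``$1=\mathtt{w}(1)=\cdots$''.
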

\begin{proof} (A1) and (A2) are clear. (A3) is evident from the
definition of {\tt w}. Finally, we will show below that (A4) holds.

Suppose that $f \in R \cap (\inv C(\mT))$ is invertible as an element
of $R$. Then obviously $f$ is also invertible as an element of
$A(\mD)$.  Hence it has no zeros or poles in $\overline{\mD}$.  For
$r\in (0,1)$, define $f_r \in A(\mD)$ by $f_r(z)=f(rz)$ ($z\in
\overline{\mD}$).  Then $f_r$ also has no zeros or poles in
$\overline{\mD}$, and has a holomorphic extension across $\mT$. From
the Argument Principle (applied to $f_r$), it follows that ${\tt
  w}(f_r)=0$. But $\|f_r-f\|_\infty \rightarrow 0$ as $r\nearrow 1$.
Hence ${\tt w}(f)= \displaystyle \lim_{r\rightarrow 1} {\tt w}(f_r)=
\displaystyle \lim_{r\rightarrow 1} 0=0$.

Suppose, conversely, that $f\in R \cap (\inv C(\mT))$ is such that
${\tt w}(f)=0$. For all $r\in (0,1)$ sufficiently close to $1$, we
have that $f_r\in \inv C(\mT)$. Also, by the local constancy of ${\tt
  w}$, for $r$ sufficiently close to $1$, ${\tt w}(f_r)= {\tt
  w}(f)=0$. By the Argument principle, it then follows that $f_r$ has
no zeros in $\overline{\mD}$. Equivalently, $f$ has no zeros in $r
\overline{\mD}$. But letting $r\nearrow 1$, we see that $f$ has no
zeros in $\mD$. Moreover, $f$ has no zeros on $\mT$ either, since
$f\in \inv C(\mT)$. Thus $f$ has no zeros in $\overline{\mD}$.
Consequently, we conclude that $f$ is invertible as an element of
$A(\mD)$. (Indeed, $f$ is invertible as an element of
$C(\overline{\mD}$, and it is also then clear that this inverse is
holomorphic in $\mD$.) Finally, since $R$ is a full subring of
$A(\mD)$, we can conclude that $f$ is invertible also as an element of
$R$.
\end{proof}

\noindent Besides $A(\mD)$ itself, some other examples of such $R$ are:
\begin{enumerate}
\item $\calP$, the set of all polynomial functions in the variable $z
  \in \mC$.
\item $RH^\infty(\mD)$, the set of all rational functions without
  poles in $\overline{\mD}$.
\item The Wiener algebra $W^{+}(\mD)$ of all functions $f\in A(\mD)$
  that have an absolutely convergent Taylor series about the origin:

$\displaystyle \sum_{n=0}^\infty |f_n|<+\infty$, where $f(z)=
\displaystyle \sum_{n=0}^\infty f_n z^n$ ($z\in \mD$).
\item $\partial^{-n}H^\infty(\mD)$, the set of $f:\mD \rightarrow \mC$ such
  that $f,f^{(1)},f^{(2)},\dots,f^{(n)}$ belong to $H^\infty(\mD)$. Here
  $H^\infty(\mD)$ denotes the Hardy algebra of all bounded and holomorphic
  functions on $\mD$.
\end{enumerate}

\noindent An application of our main result (Theorem~\ref{thm_main})
yields the following Nyquist criterion. We note that invertibility of
$f$ in $C(\mT)$ just means that $f$ belongs to $C(\mT)$ and
it has no zeros on $\mT$.

\begin{corollary}
\label{corollary_nyquist_for_disk_algebra}
  Let $R$ be a unital full subring of $A(\mD)$. Let $P\in \mS(R,
  p, m)$ and $C \in \mS(R, m, p)$. Moreover, let $P=N_P D_P^{-1}$ be
  a right coprime factorization of $P$, and $C=\widetilde{D}_C^{-1} \widetilde{N}_C $ be a
  left coprime factorization of $C$. Then the following are
  equivalent:
\begin{enumerate}
\item $C$ stabilizes $P$.
\item 
\begin{enumerate}
\item
$\det (I-CP)$ belongs to $C(\mT)$,  
\item $\det (I-CP)$, $\det D_P$, $\det \widetilde{D}_C$ have no zeros
  on $\mT$, and 
\item ${\tt w}(\det (I-CP))+{\tt w}(\det D_P)+ {\tt w}(\det
  \widetilde{D}_C)=0$.
\end{enumerate}
\end{enumerate}
\end{corollary}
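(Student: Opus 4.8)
The plan is to obtain the corollary as the direct specialization of the abstract Nyquist criterion, Theorem~\ref{thm_main}, to one particular triple. First I would invoke the lemma immediately preceding the corollary, which asserts that with
$$
R=\textrm{a unital full subring of }A(\mD),\quad S=C(\mT),\quad G=\mZ,\quad \iota={\tt w},
$$
and hence $\star$ addition in $\mZ$ and $\circ$ equal to $0$, the axioms (A1)-(A4) all hold. Since we are given $P\in\mS(R,p,m)$, $C\in\mS(R,m,p)$, a right coprime factorization $P=N_PD_P^{-1}$ and a left coprime factorization $C=\widetilde D_C^{-1}\widetilde N_C$, every hypothesis of Theorem~\ref{thm_main} is met. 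That theorem then says that $C$ stabilizes $P$ if and only if (i) $\det(I-CP)$, $\det D_P$, $\det\widetilde D_C$ all lie in $\inv C(\mT)$, and (ii) ${\tt w}(\det(I-CP))+{\tt w}(\det D_P)+{\tt w}(\det\widetilde D_C)=0$. Here (ii) is just what the group equation $\iota(\det(I-CP))\star\iota(\det D_P)\star\iota(\det\widetilde D_C)=\circ$ of condition~(2)(b) of Theorem~\ref{thm_main} becomes under the identifications above, so (ii) coincides verbatim with clause~(2)(c) of the corollary.

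It then remains to rewrite (i) as clauses (2)(a) and (2)(b). For this I would use the observation recorded just before the corollary: an element $f$ belongs to $\inv C(\mT)$ precisely when $f\in C(\mT)$ and $f$ has no zeros on $\mT$. The determinants $\det D_P$ and $\det\widetilde D_C$ are already elements of $R\subseteq A(\mD)$, hence restrict to continuous functions on $\mT$ automatically; so for these two, ``$\in\inv C(\mT)$'' amounts exactly to the requirement in (2)(b) that they have no zeros on $\mT$. The remaining determinant $\det(I-CP)$, by contrast, is a priori only an element of the field $\mF(R)$ --- from the identity $\widetilde G_C G_P=\widetilde D_C(I-CP)D_P$ established in the proof of Theorem~\ref{thm_main} one has $\det(I-CP)=\det(\widetilde G_C G_P)\,(\det D_P)^{-1}(\det\widetilde D_C)^{-1}$ with $\det(\widetilde G_C G_P)\in R$, a genuine quotient --- so for it the condition ``$\det(I-CP)\in\inv C(\mT)$'' unpacks into two separate pieces: that $\det(I-CP)$ belongs to $C(\mT)$, which is (2)(a) (and is exactly what makes the statement ``$\det(I-CP)$ has no zeros on $\mT$'' well posed), and that it has no zeros on $\mT$, which is the remaining part of (2)(b). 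Combining, (i) is equivalent to (2)(a) together with (2)(b), and with (ii) equal to (2)(c) the asserted equivalence follows.

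I do not expect any genuinely hard step here: the corollary is just Theorem~\ref{thm_main} read off under an explicit choice of $(R,S,\iota)$. The only point that calls for a little care is the bookkeeping of the second paragraph --- in particular noticing that $\det D_P$ and $\det\widetilde D_C$, being elements of $R$, carry automatic continuity on $\mT$ (so that no hypothesis of the form (2)(a) is needed for them), whereas $\det(I-CP)$, being a quotient of elements of $R$, genuinely requires the extra clause (2)(a). For completeness one may also note the degenerate case where $I-CP$ is not invertible over $\mF(R)$: then $H(P,C)$ is undefined and $C$ does not stabilize $P$, while at the same time $\det(I-CP)=0$ has zeros on $\mT$, so clause (2)(b) fails and condition (2) does not hold either; the two sides of the equivalence thus remain in agreement.
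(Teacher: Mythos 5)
Your proposal is correct and follows precisely the route the paper intends: apply the preceding lemma to verify (A1)--(A4) for $(R,C(\mT),\mZ,{\tt w})$, then read off Theorem~\ref{thm_main}, using the remark that invertibility in $C(\mT)$ means belonging to $C(\mT)$ and being zero-free on $\mT$. Your careful bookkeeping --- noting that $\det D_P,\det\widetilde D_C\in R\subseteq A(\mD)$ are automatically in $C(\mT)$ whereas $\det(I-CP)\in\mF(R)$ needs the separate clause (2)(a) --- is exactly the point the paper's one-line remark is meant to cover.
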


\noindent It can be shown that $Y=\mT$ satisfies the generalized
argument principle for $A(\mD)$; see \cite[Corollary~1.25]{Moh}.
Moreover, we know that if a function in $A(\mD)$ is invertible, then
by considering the map $r \mapsto f_r|_{\mT}:[0,1] \rightarrow \inv
C(\mT)$, we see that $f$ belongs to the connected component of $\inv
C(\mT)$ that contains $1$. So it is of the form $f|_{\mT}=e^{g}$ for
some $g\in C(\mT)$. Hence $f|_{\mT}$ has a continuous logarithm on
$\mT$. So we can take $S=C(\mT)$.  Moreover, if $\exp C(\mT)$ denotes
the connected component in $\inv C(\mT)$ which contains the constant
function $1$ on $\mT$, then $G= (\inv C(\mT)/(\exp C(\mT))$ is
isomorphic to $\mZ$ (see for example \cite[Corollary~2.20]{Dou}), and
$\iota$ can be taken as the the natural homomorphism from $\inv
C(\mT)$ to $\mZ$ given by the winding number.

\begin{remark}
  $\calP$, $RH^\infty(\mD)$ are projective free rings since they are
  both Bezout domains. Also $A(\mD)$, $W^+(\mD)$, or
  $\partial^{-n}H^\infty(\mD)$ are projective free rings, since their
  maximal ideal space is $\overline{\mD}$, which is contractible; see
  \cite{BruSas}. Thus if $R$ is one of $\calP$, $RH^\infty(\mD)$,
  $A(\mD)$, $W^{+}(\mD)$ or $\partial^{-n}H^\infty(\mD)$, then the set
  $\mS(R, p, m)$ of plants possessing a left and a right coprime
  factorization coincides with the class of plants that are
  stabilizable by \cite[Theorem~6.3]{Qua}.

  The result in Corollary \ref{corollary_nyquist_for_disk_algebra} was
  known in the special cases when $R$ is $\calP$, $RH^\infty(\mD)$ or
  $A(\mD)$; see \cite{Vid}.
\end{remark}

\subsection{Almost periodic functions}
\label{subsection_AP}

The algebra $AP$ of complex valued (uniformly) {\em almost periodic
  functions} is the smallest closed subalgebra of $L^\infty(\mR)$ that
contains all the functions $e_\lambda := e^{i \lambda y}$. Here the
parameter $\lambda$ belongs to $\mR$.  For any $f\in AP$, its {\em
  Bohr-Fourier series} is defined by the formal sum
\begin{equation}
\label{eq_BFs}
\sum_{\lambda} f_\lambda e^{i  \lambda y} , \quad y\in \mR,
\end{equation}
where
$$
f_\lambda:= \lim_{N\rightarrow \infty} \frac{1}{2N}
\int_{[-N,N]}   e^{-i \lambda y} f(y)dy, \quad
\lambda \in \mR,
$$
and the sum in \eqref{eq_BFs} is taken over the set $
\sigma(f):=\{\lambda \in \mR\;|\; f_\lambda \neq 0\}$, called the {\em
  Bohr-Fourier spectrum} of $f$. The Bohr-Fourier spectrum of every
$f\in AP$ is at most a countable set.

The {\em almost periodic Wiener algebra} $APW$ is defined as the set
of all $AP$ such that the Bohr-Fourier series \eqref{eq_BFs} of $f$
converges absolutely. The almost periodic Wiener algebra is a Banach
algebra with pointwise operations and the norm $\|f\|:=
\displaystyle\sum_{\lambda \in \mR} |f_\lambda|$.  Set
\begin{eqnarray*}
  AP^+&=& \{ f \in AP \;|\; \sigma(f) \subset [0,\infty)\}\\
  APW^+&=& \{ f \in APW \;|\; \sigma(f) \subset [0,\infty)\}.
\end{eqnarray*}
Then $AP^+$ (respectively $APW^+$) is a Banach subalgebra of $AP$
(respectively $APW$). For each $f\in \inv AP$, we can define the {\em
  average winding number} $w(f)\in \mR$ of $f$ as follows:
$$
w(f)= \lim_{T \rightarrow \infty} \frac{1}{2T}
\bigg( \arg (f(T))-\arg(f(-T))\bigg).
$$
See \cite[Theorem 1, p. 167]{JesTor}.

\begin{lemma}
\label{lemma_AP_111}
Let
\begin{eqnarray*}
  R&:=& \textrm{\em a unital full subring of }AP^+\\
  S&:=& AP, \\
  G&:=& \mR,\\
  \iota&:=& w.
\end{eqnarray*}
Then {\em (A1)-(A4)} are satisfied.
\end{lemma}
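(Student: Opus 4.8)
The plan is to verify the four axioms (A1)--(A4) for the triple $(R, AP, w)$ by following essentially the same strategy used for the disk-algebra lemma, but with the Argument Principle replaced by the theory of the average winding number for almost periodic functions.

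First, (A1) is immediate: a full subring of $AP^+$ is a unital commutative ring. For (A2), one must check that $R \subset AP$ as a subset and that $AP$ is a unital commutative Banach algebra; the inclusion $R \subset AP^+ \subset AP$ is clear, and $AP$ is a commutative $C^*$-algebra (it is a closed subalgebra of $L^\infty(\mR)$ containing the constants). For (A3), the key fact is that $w: \inv AP \to \mR$ is a group homomorphism, i.e. $w(fg) = w(f) + w(g)$ for $f, g \in \inv AP$; this follows from the additivity of $\arg(f(T)g(T)) = \arg f(T) + \arg g(T)$ (modulo $2\pi$, with the continuous branch chosen consistently) together with the limit definition, and is part of the content of \cite[Theorem 1, p.~167]{JesTor}. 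One should also note $w(1) = 0$, so $w$ maps the identity to the identity of $(\mR, +)$.

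The substance is (A4). Let $f \in R \cap \inv AP$. For the `only if' direction, suppose $f$ is invertible in $R$; then $f$ is invertible in $AP^+$ (as $R$ is full in $AP^+$... actually $R$ full in $AP^+$ gives the converse direction, so here I use that invertibility in $R$ implies invertibility in $AP^+$ trivially since $R \subset AP^+$ and the inverse lies in $R \subset AP^+$). An element of $AP^+$ that is invertible in $AP^+$ has its inverse in $AP^+$, hence both $f$ and $f^{-1}$ have Bohr--Fourier spectrum in $[0,\infty)$. The classical result (Bohr's theorem on almost periodic functions with spectrum in a half-line, see \cite{JesTor}) is that such an $f$ has average winding number $w(f) = 0$; equivalently, $f$ admits a continuous (indeed almost periodic) logarithm, so $w(f) = 0$. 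For the `if' direction, suppose $f \in R \cap \inv AP$ with $w(f) = 0$. Then by the same half-line theory, $f$ being in $AP^+$, invertible in $AP$, with zero average winding number implies $f$ is invertible in $AP^+$ (this is the analogue of ``no zeros in $\overline{\mD}$''; the relevant statement is that $f \in AP^+ \cap \inv AP$ is invertible in $AP^+$ iff $w(f) = 0$, which is precisely the Bohr-type theorem). Finally, since $R$ is a full subring of $AP^+$ and $f$ is invertible in $AP^+$, we conclude $f$ is invertible in $R$, establishing (A4).

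The main obstacle is pinning down the precise almost-periodic analogue of the Argument Principle: one needs the theorem that for $f \in AP^+$ which is invertible in $AP$, invertibility in $AP^+$ is equivalent to $w(f) = 0$ (equivalently, to the existence of an $AP^+$ logarithm). This is a known result in the theory of almost periodic functions and Toeplitz operators with almost periodic symbols (it is the $AP$ version of the Coburn--Douglas--Fredholm index theory, and appears in \cite{JesTor}), but one must cite it carefully and make sure the half-line condition $\sigma(f) \subset [0,\infty)$ is being used exactly where needed. Once that theorem is in hand, the rest of the verification is a routine transcription of the disk-algebra argument, with $r \nearpoint 1$-type approximation replaced by the direct half-line spectral characterization.
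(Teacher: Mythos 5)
Your outline matches the paper's strategy: verify (A1)--(A3) directly, then for (A4) reduce via the full-subring hypothesis to the statement that $f\in AP^+\cap\inv AP$ is invertible in $AP^+$ iff $w(f)=0$. The one place where you flag uncertainty --- pinning down the precise citation for that key equivalence --- is exactly where the paper departs from your sketch. You attribute it to \cite{JesTor} (Jessen--Tornehave, mean motions) and "Bohr's theorem," but the paper instead assembles the equivalence from three more targeted ingredients: (i) \cite[Theorem~1, p.~776]{CalDes}, which says $\inf_{\mathrm{Im}\, s\geq 0}|f(s)|>0$ iff $\inf_{y\in\mR}|f(y)|>0$ and $w(f)=0$; (ii) the corona theorem for $AP$ (Gamelin), identifying $\inf_y|f(y)|>0$ with invertibility in $AP$; and (iii) the Arens--Singer-type corona theorem for $AP^+$ (B\"ottcher), identifying $\inf_{\mathrm{Im}\, s\geq 0}|f(s)|>0$ with invertibility in $AP^+$. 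Your alternative `only if' argument via $w(f)+w(f^{-1})=0$ together with nonnegativity of $w$ on $AP^+\cap\inv AP$ is also sound in spirit, but that nonnegativity is itself a nontrivial fact needing the same kind of citation; the paper avoids it entirely by reading both directions of (A4) off the single iff in Callier--Desoer. So the idea is right, but to close the proof you would need to replace the vague reference to \cite{JesTor} with the Callier--Desoer plus two-corona-theorems chain.
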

\begin{proof} (A1) and (A2) are clear. (A3) follows from the
  definition of $w$. Finally, (A4) follows from
  \cite[Theorem~1,~p.776]{CalDes} which says that $f\in AP^+$
  satisfies
\begin{equation}
\label{cc_AP+}
\inf_{\textrm{Im}(s)\geq 0} |f(s)| >0
\end{equation}
iff $\displaystyle \inf_{y\in \mR}|f(y)| >0$ and $w(f)=0$.  But
$$
\displaystyle \inf_{y\in \mR}|f(y)| >0
$$
is equivalent to $f$ being an invertible element of $AP$ by the corona
theorem for $AP$ (see for example \cite[Exercise~18,~p.24]{Gam}). Also
the equivalence of \eqref{cc_AP+} with that of the invertibility of
$f$ as an element of $AP^+$ follows from the Arens-Singer corona
theorem for $AP^+$ (see for example \cite[Theorems~3.1,~4.3]{Bot}).
Finally, the invertibility of $f \in R$ in $R$ is equivalent to the
invertibility of $f$ as an element of $AP^+$ since $R$ is a full
subring of $AP^+$.
\end{proof}

\begin{remark}
\label{examples_of_R_in_AP+}
Specific examples of such $R$ are $AP^+$ and $APW^+$. More generally,
let $\Sigma \subset [0,+\infty)$ be an {\em additive semigroup} (if
$\lambda,\mu \in \Sigma $, then $\lambda+\mu \in \Sigma$) and suppose
$0 \in \Sigma$.  Denote
\begin{eqnarray*}
  AP_\Sigma&=& \{ f \in AP \;|\; \sigma(f) \subset \Sigma\}\\
  APW_\Sigma&=& \{ f \in APW \;|\; \sigma(f) \subset \Sigma\}.
\end{eqnarray*}
Then $AP_\Sigma$ (respectively $APW_\Sigma$) is a unital Banach
subalgebra of $AP^+$ (respectively $APW^+$). Let
$\overline{Y_{\Sigma}}$ denote the set of all maps $\theta:\Sigma
\rightarrow [0,+\infty]$ such that $ \theta(0)=0$ and $\theta
(\lambda+\mu)= \theta (\lambda)+ \theta(\mu)$ for all $\lambda, \mu
\in \Sigma$.  Examples of such maps $\theta$ are the following. If
$y\in [0,+\infty)$, then $\theta_y$, defined by $\theta_y(\lambda)=
\lambda y$, $\lambda \in \Sigma$, belongs to $\overline{Y_{\Sigma}}$.
Another example is $\theta_{\scriptscriptstyle \infty}$, defined as follows:
$$
\theta_{\scriptscriptstyle \infty}(\lambda)= \left\{ \begin{array}{ll}
0 & \textrm{if } \lambda =0,\\
+\infty & \textrm{if } \lambda \neq 0.
\end{array}\right.
$$
So in this way we can consider $[0,+\infty]$ as a subset of
$\overline{Y_{\Sigma}}$.

The results \cite[Proposition~4.2, Theorem 4.3]{Bot} say that if
$\overline{Y_{\Sigma}} \subset [0,+\infty]$, and $f\in AP_\Sigma$
(respectively $APW_\Sigma$), then $f \in \inv AP_\Sigma$ (respectively
$\in \inv APW_\Sigma$) iff \eqref{cc_AP+} holds. So in this case
$AP_\Sigma$ and $APW_\Sigma$ are unital full subalgebras of $AP^+$.
\end{remark}

An application of our main result (Theorem \ref{thm_main}) yields the
following Nyquist criterion. We note that invertibility of $f$ in $AP$
just means that $f$ belongs to $AP$ and is bounded away from zero on
$\mR$ again by the corona theorem for $AP$.

\begin{corollary}
\label{corollary_nyquist_for_AP}
  Let $R$ be a unital full subring of $AP^+$. Let $P\in \mS(R, p, m)$
  and $C \in \mS(R, m, p)$. Moreover, let $P=N_P D_P^{-1}$ be a right
  coprime factorization of $P$, and $C=\widetilde{D}_C^{-1}
  \widetilde{N}_C $ be a left coprime factorization of $C$. Then the
  following are equivalent:
\begin{enumerate}
\item $C$ stabilizes $P$.
\item 
\begin{enumerate}
\item $\det (I-CP)$ belongs to $AP$, 
\item $\det (I-CP)$, $\det D_P$, $\det
  \widetilde{D}_C$ are bounded away from $0$ on $\mR$, %and  
\item $ w(\det
  (I-CP))+ w(\det D_P)+ w(\det \widetilde{D}_C)=0$.
\end{enumerate}
\end{enumerate}
\end{corollary}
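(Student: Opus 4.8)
The plan is to deduce this directly from the abstract Nyquist criterion, Theorem~\ref{thm_main}. By Lemma~\ref{lemma_AP_111}, the triple $(R,S,\iota)$ with $S:=AP$, $G:=\mR$ and $\iota:=w$ satisfies {\em (A1)-(A4)} whenever $R$ is a unital full subring of $AP^+$. Hence Theorem~\ref{thm_main} applies with these choices, and its equivalence (1)$\Leftrightarrow$(2) reads: $C$ stabilizes $P$ if and only if $\det(I-CP)$, $\det D_P$, $\det\widetilde{D}_C$ all lie in $\inv AP$ and $w(\det(I-CP))+w(\det D_P)+w(\det\widetilde{D}_C)=0$, where I have used that the group operation on $\mR$ is addition with identity $0$. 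It then remains only to rewrite the condition ``all three determinants lie in $\inv AP$'' in the concrete shape {\em (2)(a)-(2)(b)} of the statement.

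For that rewriting I would first observe that $D_P$ and $\widetilde{D}_C$ are matrices with entries in $R\subseteq AP^+$, so $\det D_P$ and $\det\widetilde{D}_C$ automatically belong to $AP^+\subseteq AP$; by the corona theorem for $AP$ quoted just before the corollary, for these two factors ``$\in\inv AP$'' is therefore equivalent to being bounded away from $0$ on $\mR$. The matrices $C$ and $P$, on the other hand, only have entries in $\mF(R)$, so $\det(I-CP)\in\mF(R)$ need not a priori lie in $AP$; this is precisely why the membership $\det(I-CP)\in AP$ appears as the separate hypothesis {\em (2)(a)}. Once that membership is granted, the $AP$ corona theorem again turns invertibility of $\det(I-CP)$ in $AP$ into its being bounded away from $0$ on $\mR$. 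Assembling these observations yields {\em (2)(a)} together with {\em (2)(b)}, while the winding-number identity above is verbatim {\em (2)(c)}.

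There is no real obstacle here: all the substance sits in Theorem~\ref{thm_main} and in Lemma~\ref{lemma_AP_111}. The only point requiring a little care is the bookkeeping just described --- separating the ``belongs to $AP$'' part of invertibility (a genuine extra requirement only for $\det(I-CP)$, automatic for $\det D_P$ and $\det\widetilde{D}_C$) from the ``bounded away from $0$'' part, and invoking the $AP$ corona theorem to move between $\inv AP$ and $\{\,f\in AP:\inf_{\mR}|f|>0\,\}$. I would simply carry this translation out in both directions to obtain the equivalence (1)$\Leftrightarrow$(2).
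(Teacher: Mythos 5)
Your proposal is correct and follows exactly the route the paper takes: invoke Lemma~\ref{lemma_AP_111} to verify (A1)--(A4), apply Theorem~\ref{thm_main}, and then use the corona theorem for $AP$ to translate ``$\in\inv AP$'' into ``belongs to $AP$ and is bounded away from zero on $\mR$.'' Your side remark explaining why the membership condition (2)(a) is stated only for $\det(I-CP)$ --- because $\det D_P$ and $\det\widetilde D_C$ already lie in $AP^+\subseteq AP$ automatically --- is a useful bit of bookkeeping the paper leaves implicit, but it does not change the argument.
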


\noindent Finally, in the case of the analytic almost periodic algebra
$AP^+$, we show below that the choices of $S$ and $\iota$ are
precisely of the type described in Subsections~\ref{subsection_iota}
and \ref{subsection_S}.  Let $\mR_B$ denote the Bohr compactification
of $\mR$. Then $X_{AP^+}$ contains a copy of $\mR_B$ (since
$X_{AP}=\mR_B$, and $AP^+\subset AP$), and we show below that
$Y:=\mR_B$ satisfies the strong generalized argument principle for
$AP^+$. Thus we can take $S=C(\mR_B)=AP$, and we will also show that
the $\iota_{AP}$ coincides with the average winding number defined
above.

\begin{lemma}
$\mR_B$ satisfies the strong generalized argument principle for $AP^+$.
\end{lemma}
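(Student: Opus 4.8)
The statement to establish is that $Y := \mR_B$ satisfies the strong generalized argument principle for $AP^+$, i.e. for $a \in AP^+$, we have $a$ invertible in $AP^+$ if and only if $\log \widehat{a}$ is defined continuously on $\mR_B$. The plan is to reduce this to the already-cited analytic facts about $AP^+$, namely the Arens--Singer corona theorem for $AP^+$ and the characterization of invertibility via \eqref{cc_AP+} together with the vanishing of the average winding number. First I would unwind the Gelfand picture: since $X_{AP} = \mR_B$ and $AP^+ \subset AP$, the restriction of the Gelfand transform of $a \in AP^+$ to the copy of $\mR_B$ sitting inside $X_{AP^+}$ is exactly the image of $a$ under $a \mapsto \widehat a|_{\mR_B}$, which can be identified with $a$ itself viewed as an element of $AP = C(\mR_B)$. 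So ``$\log\widehat a$ is defined continuously on $\mR_B$'' means precisely that $a$, as an element of $AP$, has a continuous logarithm, which by the connectedness argument already used in Lemma~\ref{lemma_gen_1} and Subsection~\ref{subsection_iota} is equivalent to $a \in \inv AP$ with $w(a) = \circ$, i.e. $\inf_{y \in \mR}|a(y)| > 0$ and $w(a) = 0$.

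For the forward (``only if'') direction, suppose $a$ is invertible in $AP^+$. Then in particular $a$ is invertible in $AP$, so $\widehat a|_{\mR_B}$ is nonvanishing; and by the result of \cite[Theorem~1,~p.776]{CalDes} quoted in the proof of Lemma~\ref{lemma_AP_111}, invertibility of $a$ in $AP^+$ forces $w(a) = 0$. Hence $\widehat a|_{\mR_B}$ lies in the identity component of $\inv C(\mR_B)$ and therefore has a continuous logarithm on $\mR_B$. For the converse (``if'') direction, suppose $\log\widehat a$ is defined continuously on $\mR_B$. Then $a \in \inv AP$ with $\inf_{y\in\mR}|a(y)| > 0$, and the continuous logarithm forces $w(a) = 0$. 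Applying \cite[Theorem~1,~p.776]{CalDes} in the other direction, these two conditions are equivalent to \eqref{cc_AP+}, i.e. $\inf_{\mathrm{Im}(s)\ge 0}|a(s)| > 0$, and by the Arens--Singer corona theorem for $AP^+$ (\cite[Theorems~3.1,~4.3]{Bot}) this is equivalent to $a$ being invertible as an element of $AP^+$. This closes the equivalence.

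The main obstacle I anticipate is the bookkeeping about the average winding number versus the abstract index $\iota_{AP}$: one must check that $w(a) = 0$ is genuinely equivalent to ``$\widehat a|_{\mR_B}$ has a continuous logarithm'' rather than merely implied by it. This is handled by recalling (as in Subsection~\ref{subsection_iota}, via \cite[Proposition~2.9]{Dou}) that in the commutative Banach algebra $AP = C(\mR_B)$ an invertible element has a continuous logarithm precisely when it lies in the identity component $\exp AP$, and that the average winding number $w$ is a group homomorphism $\inv AP \to \mR$ whose kernel is exactly $\exp AP$ — which is the content of the cited \cite[Theorem~1,~p.167]{JesTor}. Once that identification is in place, the rest is a direct concatenation of the three quoted equivalences (corona for $AP$, the $AP^+$ condition of \cite{CalDes}, and the Arens--Singer corona theorem of \cite{Bot}), with no further analysis required. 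A remaining minor subtlety, worth a sentence in the write-up, is to confirm that the copy of $\mR_B$ inside $X_{AP^+}$ is indeed the relevant set on which to test the logarithm — i.e. that it is closed in $X_{AP^+}$ and that the Gelfand transform restricted to it recovers the $AP$-valued realization of $a$; this is immediate from $X_{AP} = \mR_B$ and the inclusion $AP^+ \hookrightarrow AP$ being a restriction of Gelfand transforms.
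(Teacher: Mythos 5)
Your proposal is correct, but it takes a genuinely different route from the paper in the ``only if'' direction. Where you go from ``$a$ invertible in $AP^+$'' to ``$\widehat a|_{\mR_B}$ has a continuous logarithm'' by first invoking \cite[Theorem~1,~p.776]{CalDes} (together with the Arens--Singer corona theorem) to get $w(a)=0$, and then using the Jessen--Tornehave factorization $\arg f(t)=w(f)t+g(t)$ with $g\in AP$ to conclude $a\in\exp AP=\ker w$, the paper instead constructs an explicit homotopy $\Phi\colon[0,1]\to\inv AP$, $\Phi(t)=f(\cdot-i\log(1-t))$ for $t\in[0,1)$ and $\Phi(1)=f_0$, joining $a$ to a nonzero constant inside $\inv AP$; this puts $\widehat a|_{\mR_B}$ in $\exp C(\mR_B)$ directly without ever computing $w(a)$. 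Your approach is tidier in the sense that it runs both directions through the single identification $\ker w=\exp AP$, a fact the paper does establish (in the paragraph immediately following the lemma, and not depending on the lemma, so there is no circularity); the paper's homotopy argument is more self-contained, needing only the analytic extension of invertible elements of $AP^+$ to the upper half-plane and not the full strength of Jessen--Tornehave. In the ``if'' direction the two proofs are essentially the same modulo presentation: you pass through $\exp AP\subset\ker w$, the paper writes $f=e^g$ with $g\in C(\mR_B)=AP$ and computes $w(f)=\lim_{T\to\infty}\frac{1}{2T}\bigl(\operatorname{Im}g(T)-\operatorname{Im}g(-T)\bigr)=0$ directly from boundedness of $\operatorname{Im}g$, then invokes the already-proved (A4) of Lemma~\ref{lemma_AP_111}. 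One small polish for a final write-up: when you claim that having a continuous logarithm ``forces $w(a)=0$,'' you should either cite the kernel identification or include the one-line boundedness-of-$\operatorname{Im}g$ computation, since $\exp AP\subset\ker w$ is the direction of that identification you actually need there, and it is the elementary half.
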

\begin{proof} First of all, suppose that $f\in AP^+$ has a continuous
  logarithm on $\mR_B$. Then $f=e^g $ for some $g\in C(\mR_B)=AP$. But
  then since $g\in AP$, we have that $\textrm{Im}(g)$ is bounded on
  $\mR$.
\begin{eqnarray*}
w(f)&=& \lim_{T\rightarrow \infty} \frac{1}{2R}
\bigg( \arg (f(T))-\arg(f(-T))\bigg)
\\
&=&
\lim_{T \rightarrow \infty} \frac{1}{2T}
\bigg( \textrm{Im} (g(T))-\textrm{Im}(g(-T))\bigg)
=0
.
\end{eqnarray*}
But by (A4) (shown in Lemma~\ref{lemma_AP_111}), it follows that $f$
is invertible as an element of $AP^+$.

Conversely, suppose that 
$$
f=\displaystyle \sum_{n=1}^\infty f_n
e^{i\lambda_n \cdot}
$$ 
is invertible as an element of $AP^+$.  Consider the map $\Phi: [0,1] \rightarrow \inv AP$ given by
$\Phi(t)=f(\cdot-i\log(1-t))$ if $t\in [0,1)$ and $\Phi(1)=f_0$. Thus
$\widehat{f}|_{\mR_B}$ belongs to the connected component of $\inv AP$
that contains the constant function $1$. Hence
$\widehat{f}|_{\mR_B}=e^{g}$ for some $g\in C(\mR_B)$. This shows that
$\widehat{f}$ has a continuous logarithm on $\mR_B$.
\end{proof}

Moreover, $\iota_{AP}$ coincides with the average winding number.
Indeed, the result \cite[Theorem 1, p. 167]{JesTor} says that if $f\in
\inv AP$, then there exists a $g\in AP$ such that
$\arg f(t)=w(f) t +g(t)$ ($t\in \mR$). Hence
$$
f=|f| e^{i(w(f)t+g)}=e^{\log |f| +i(w(f)t+g)}=e^{\log |f|+ig}
e^{iw(f)t}.
$$
Since $\log |f|+ig\in AP$, it follows that
$\iota_{AP}(f)=\iota_{AP}(e^{iw(f)t})$. But now with the association
$\iota_{AP}(e^{iw(f)t})\leftrightarrow w(f)$, we see that the maps
$\iota_{AP}$ and $w$ are the same.

So $AP$ and $w$ are precisely $S_Y$ and $\iota_{C(Y)}$, respectively,
described in Subsections~\ref{subsection_iota} and \ref{subsection_S}
when $Y=\mR_B$.

\begin{remark}
  It was shown in \cite{BruSas} that $AP^+$ and $APW^+$ are projective
  free rings.  Thus if $R=AP^+$ or $APW^+$, then the set $\mS(R, p,
  m)$ of plants possessing a left and a right coprime factorization
  coincides with the class of plants that are stabilizable by
  \cite[Theorem~6.3]{Qua}.

  Corollary \ref{corollary_nyquist_for_AP} was known in the special
  case when $R=APW^+$; see \cite{CalDes}.
\end{remark}

\subsection{Algebras of Laplace transforms of measures without a
  singular nonatomic part}

Let $\mC_{+}:=\{s\in \mC\;|\; \textrm{Re}(s)\geq 0\}$ and let
$\calA^+$ denote the Banach algebra

$
\calA^+=\left\{ s (\in \mC_{+}) \mapsto \widehat{f_a}(s)
  +\displaystyle \sum_{k=0}^\infty f_k e^{- s t_k} \; \bigg| \;
\begin{array}{ll}
f_a \in L^{1}(0,\infty), \;(f_k)_{k\geq 0} \in \ell^{1},\\
0=t_0 <t_1 ,t_2 , t_3, \dots
\end{array} \right\}
$

\noindent equipped with pointwise operations and the norm:

$
\|F\|=\|f_a\|_{\scriptscriptstyle L^{1}} +
\|(f_k)_{k\geq 0}\|_{\scriptscriptstyle \ell^1},
\;\; F(s)=\widehat{f_a}(s) +\displaystyle\sum_{k=0}^\infty f_k e^{-st_k}\;\;(s\in \mC_+).
$

\noindent Here $\widehat{f_a}$ denotes the {\em Laplace transform of}
$f_a$, given by
$$
\widehat{f_a}(s)=\displaystyle \int_0^\infty e^{-st} f_a(t)
dt, \quad s \in \mC_+.
$$
Similarly, define the Banach algebra $\calA$ as follows (\cite{GohFel}):

$
\!\!\!\!\!\!\calA\!=\!\left\{ iy (\in i\mR) \mapsto \widehat{f_a}(iy)
  +\!\!\!\displaystyle\sum_{k=-\infty}^\infty f_k e^{- iy t_k} \bigg|
\begin{array}{ll}
f_a \in L^{1}(\mR), \;(f_k)_{k\in \mZ } \in \ell^{1},\\
\dots, t_{-2}, t_{-1}<\!0\!=\!t_0\! <t_1 ,t_2 ,  \dots
\end{array} \!\!\!\right\}
$

\noindent equipped with pointwise operations and the norm:

$
\|F\|=\|f_a\|_{\scriptscriptstyle L^{1}} + \|(f_k)_{k\in
  \mZ}\|_{\scriptscriptstyle \ell^1}, \;\; F(iy):=\widehat{f_a}(iy)
+\displaystyle\sum_{k=-\infty}^\infty f_k e^{-iy t_k}\;\;(y\in \mR).
$

\noindent Here $\widehat{f_a}$ is the {\em Fourier transform of}
$f_a$, $
\widehat{f_a}(iy)= \displaystyle \int_{-\infty}^\infty e^{-iyt} f_a(t)
dt$, ($y \in \mR$). 

\noindent It can be shown that $\widehat{L^{1}(\mR)}$ is an ideal of $\calA$.

\noindent For $ F=\widehat{f_a} +\displaystyle\sum_{k=-\infty}^\infty
f_k e^{-i\cdot t_k}\in \calA$, we set
$F_{AP}(iy)=\displaystyle\sum_{k=-\infty}^\infty f_k e^{-iy t_k}$
($y\in \mR$).

\noindent If $F =\widehat{f_a}+F_{AP} \in \inv \calA$, then it can be shown that
$F_{AP}(i \cdot) \in \inv AP$ as follows. First of all, the maximal
ideal space of $\calA$ contains a copy of the maximal ideal space of
$APW$ in the following manner: if $\varphi \in M(APW)$, then the map
$\Phi: \calA \rightarrow \mC$ defined by
$\Phi(F)=\Phi(\widehat{f_a}+F_{AP})=\varphi(F_{AP}(i\cdot))$, ($F \in
\calA$), belongs to $M(\calA)$. So if $F$ is invertible in $\calA$, in
particular for every $\Phi$ of the type describe above, $0 \neq
\Phi(F)=\varphi(F_{AP}(i\cdot))$. Thus by the elementary theory of
Banach algebras, $F_{AP}(i\cdot)$ is an invertible element of $AP$.

Moreover, since $\widehat{L^1(\mR)}$ is an ideal in $\calA$,
$F_{AP}^{-1}\widehat{f_a}$ is the Fourier transform of a function in
$L^{1}(\mR)$, and so the map $y \mapsto 
1+(F_{AP}(iy))^{-1}\widehat{f_a}(iy)=\frac{F(iy)}{F_{AP}(iy)}$ has a
well-defined winding number ${\tt w}$ around $0$. Define $W: \inv
\calA \rightarrow \mR\times \mZ$ by $ W (F)= (w(F_{AP}), {\tt
  w}(1+F_{AP}^{-1} \widehat{f_a}) )$, where
$F=\widehat{f_a}+F_{AP} \in \inv \calA$, and
$$
\begin{array}{ll}
w(F_{AP})
:=
\displaystyle \lim_{R \rightarrow \infty} \frac{1}{2R}
\bigg( \arg \big(F_{AP}(iR)\big)-\arg\big(F_{AP}(-iR)\big)\bigg),
\\
{\tt w}(1+F_{AP}^{-1} \widehat{f_a})
:=
\displaystyle \frac{1}{2\pi}\bigg( \arg \big(1+(F_{AP}(iy)\big)^{-1} \widehat{f_a}(iy)  )
\bigg|_{y=-\infty}^{y=+\infty}\bigg).
\end{array}
$$

\begin{lemma}
\label{lemma_inv_A}
$F=\widehat{f_a}+F_{AP} \in \calA$ is invertible iff for all $y\in
\mR$, $F(iy) \neq 0$ and $\displaystyle \inf_{y\in \mR} |F_{AP}(iy)| >0$ .
\end{lemma}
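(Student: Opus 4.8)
The plan is to handle the two implications separately. For the ``only if'' direction almost everything is already at hand: if $F=\widehat{f_a}+F_{AP}\in\inv\calA$, then for each $y\in\mR$ the point evaluation $F\mapsto F(iy)$ is a bounded ($|F(iy)|\le\|F\|$), multiplicative, nonzero linear functional on $\calA$, hence a character, so $F(iy)\ne 0$; and the discussion preceding the lemma shows that $F_{AP}(i\cdot)$ is invertible in $AP$, so its inverse lies in $AP\subset L^\infty(\mR)$ and is therefore bounded, which gives $\inf_{y\in\mR}|F_{AP}(iy)|\ge 1/\|(F_{AP}(i\cdot))^{-1}\|_\infty>0$.

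For the ``if'' direction, assume $F(iy)\ne 0$ for all $y\in\mR$ and $\inf_{y\in\mR}|F_{AP}(iy)|>0$. The first move is to peel off the almost periodic part. Since $F_{AP}(i\cdot)\in APW$ is bounded away from $0$ on $\mR$, the Bochner--Phillips theorem (the almost periodic analogue of Wiener's $1/f$ lemma; see \cite{GohFel}) guarantees that the inverse $\Phi$ of $F_{AP}(i\cdot)$ again lies in $APW$. As $APW$ is a unital Banach subalgebra of $\calA$, we have $\Phi\in\calA$ and $\Phi$ is invertible in $\calA$ with inverse $F_{AP}(i\cdot)$. Then $\Phi F=\Phi\,\widehat{f_a}+1$, and since $\widehat{L^1(\mR)}$ is an ideal of $\calA$ we get $\Phi\,\widehat{f_a}=\widehat h$ for some $h\in L^1(\mR)$; hence $\Phi F=1+\widehat h$. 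It therefore suffices to prove $1+\widehat h\in\inv\calA$, for then $F=\bigl(F_{AP}(i\cdot)\bigr)(1+\widehat h)$ is a product of two invertible elements of $\calA$.

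To invert $1+\widehat h$ I would pass to the classical Wiener algebra on the line, $\calW:=\mC\,1+\widehat{L^1(\mR)}$, a unital Banach subalgebra of $\calA$ whose maximal ideal space is the one-point compactification $\mR\cup\{\infty\}$: the character at a finite $y$ sends $c\,1+\widehat g$ to $c+\widehat g(iy)$, and the character at $\infty$ sends $c\,1+\widehat g$ to $c$, the latter being well defined because $\widehat g(iy)\to 0$ as $|y|\to\infty$ by the Riemann--Lebesgue lemma. By Wiener's $1/f$ theorem, $1+\widehat h$ is invertible in $\calW$ as soon as it vanishes at none of these characters; its value at $\infty$ is $1\ne 0$, and at a finite $y$ it equals $(\Phi F)(iy)=\Phi(iy)F(iy)$, which is nonzero since $\Phi(iy)=F_{AP}(iy)^{-1}\ne 0$ and $F(iy)\ne 0$ by hypothesis. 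Invertibility of $1+\widehat h$ in $\calW$ then gives invertibility in $\calA$, which finishes the argument.

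The only substantial inputs are the two cited Wiener-type theorems, and the step I would watch most carefully is the appeal to Bochner--Phillips: it is precisely there that it matters that $F_{AP}(i\cdot)$ lies in $APW$ and not merely in $AP$ (this is what forces $\Phi\in\calA$), and one must record that invertibility of $F_{AP}(i\cdot)$ computed inside the subalgebra $APW$ is the same as its invertibility in $\calA$. An alternative, somewhat longer route bypasses $\calW$ by describing $M(\calA)$ directly: every character either annihilates $\widehat{L^1(\mR)}$, hence factors through $\calA/\widehat{L^1(\mR)}\cong APW$, or it does not, in which case its restriction to $\widehat{L^1(\mR)}$ is a point evaluation $\widehat g\mapsto\widehat g(iy)$ of an $L^1$-Fourier transform and one checks that the full character must then be $F\mapsto F(iy)$; this yields both directions at once.
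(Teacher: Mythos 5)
Your proof is correct and takes essentially the same route as the paper: isolate the almost-periodic factor, invert it in $APW$, and reduce to inverting $1+\widehat h$ in the Wiener algebra $\calW=\mC+\widehat{L^1(\mR)}$ via Wiener's $1/f$ theorem. Your explicit appeal to the Bochner--Phillips lemma to keep the inverse of $F_{AP}(i\cdot)$ inside $APW$ (so that the factorization actually takes place in $\calA$) is a detail the paper's proof leaves implicit, and you are right to flag it.
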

\begin{proof} The `only if' part is clear. We simply show the `if'
  part below.

  Let $F=\widehat{f_a}+F_{AP} \in \calA$ be such that
$$
\inf_{y\in \mR} |F_{AP}(iy)| >0.
$$
Thus $F(i\cdot)$ is invertible as an element of $AP$. Hence
$F=F_{AP}(1+\widehat{f_{a}} F_{AP}^{-1})$ and so it follows that $ (
1+\widehat{f_{a}} F_{AP}^{-1})(iy) \neq 0$ for all $y \in \mR$.  But
by the corona theorem for
$$
\calW:=\widehat{L^1(\mR)}+\mC
$$
(see \cite[Corollary~1,~p.109]{GelRaiShi}), it follows that
$1+\widehat{f_{a}} F_{AP}^{-1}$ is invertible as an element of $\calW$
an in particular, also as an element of $\calA$. This completes the
proof.
\end{proof}

\begin{lemma}
Let
\begin{eqnarray*}
  R&:=&\textrm{\em a unital full subring of } \calA^+, \\
  S&:=& \calA, \\
  G&:=& \mR\times \mZ,\\
  \iota&:=& W.
\end{eqnarray*}
Then {\em (A1)-(A4)} are satisfied.
\end{lemma}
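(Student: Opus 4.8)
The plan is to follow the pattern of the three preceding lemmas: (A1)--(A3) are soft, and the content is concentrated in (A4), which I would reduce to the corresponding statement for $AP^+$ (Lemma~\ref{lemma_AP_111} applied to $R=APW^+$) together with an argument-principle computation for the ideal $\widehat{L^1(0,\infty)}$ on the half plane. (A1) is immediate. For (A2), embed $\calA^+$ into $\calA$ by extending each $f_a\in L^1(0,\infty)$ by zero to $L^1(\mR)$ and each one-sided sequence $(f_k)_{k\geq 0}$ by zeros for negative indices; restriction to $i\mR$ is then an isometric unital algebra homomorphism $\calA^+\to\calA$, injective because the splitting of an element of $\calA$ into its $\widehat{L^1(\mR)}$-part and its $AP$-part is unique (the absolutely continuous and the atomic part of the underlying measure are intrinsic). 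Hence $R\subset\calA^+\subset\calA=S$, and $S$ is a unital commutative Banach algebra, so (A2) holds. For (A3): that $W$ is a well-defined map $\inv\calA\to\mR\times\mZ$ is exactly what is verified in the discussion preceding Lemma~\ref{lemma_inv_A} (if $F=\widehat{f_a}+F_{AP}\in\inv\calA$ then $F_{AP}(i\cdot)\in\inv AP$, so $w(F_{AP})$ is defined, and $1+F_{AP}^{-1}\widehat{f_a}\in\calW$ tends to $1$ at $\pm\infty$, so its winding number around $0$ is defined), and $W$ is a homomorphism because $\widehat{L^1(\mR)}$ is an ideal of $\calA$: for $F,G\in\inv\calA$ one gets $(FG)_{AP}=F_{AP}G_{AP}$ and $FG/(FG)_{AP}=(F/F_{AP})(G/G_{AP})$, so both coordinates of $W$ are additive, by additivity of the average winding number on $\inv AP$ and of the ordinary winding number on the invertibles of $\calW$. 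The identity of $G$ is $(0,0)$.

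For (A4), since $R$ is a full subring of $\calA^+$ it suffices to prove that $x=\widehat{x_a}+x_{AP}\in\calA^+\cap\inv\calA$ is invertible in $\calA^+$ if and only if $W(x)=(0,0)$. Set $\calB^+:=\{\,\sum_{k\geq 0}f_ke^{-st_k}\;:\;(f_k)\in\ell^1,\ t_k\geq 0\,\}$, which is isometrically isomorphic to $APW^+$ (a unital full subring of $AP^+$ by Remark~\ref{examples_of_R_in_AP+}), so by Lemma~\ref{lemma_AP_111} an element $g\in\calB^+$ is invertible in $\calB^+$ iff $g|_{i\mR}\in\inv AP$ and $w(g)=0$; and set $\calW^+:=\mC+\widehat{L^1(0,\infty)}\subset\calA^+$, whose maximal ideal space is $\mC_+\cup\{\infty\}$ (the half-plane analogue of \cite[Corollary~1,~p.109]{GelRaiShi}). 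Exactly as for $\calA$, one checks that $\widehat{L^1(0,\infty)}$ is an ideal of $\calA^+$; hence whenever $x_{AP}\in\inv\calB^+$ one has the factorization $x=x_{AP}\cdot h$ with $h:=1+x_{AP}^{-1}\widehat{x_a}\in\calW^+$. If $W(x)=(0,0)$, then $x_{AP}|_{i\mR}\in\inv AP$ (since $x\in\inv\calA$) and $w(x_{AP})=0$, so $x_{AP}\in\inv\calB^+$; the factor $h\in\calW^+$ is holomorphic on $\textrm{Re}(s)>0$, continuous on $\mC_+$, tends to $1$ at $\infty$, has no zeros on $i\mR$, and ${\tt w}(h)=0$, so by the half-plane argument principle --- applied to the translates $s\mapsto h(s+\varepsilon)$, which are holomorphic across $i\mR$ and converge to $h$ in $\calW^+$, exactly as in the proof for the disk algebra above --- $h$ has no zeros in $\textrm{Re}(s)>0$, and with $h(\infty)=1$ this gives $h\in\inv\calW^+\subset\inv\calA^+$. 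Thus $x=x_{AP}h\in\inv\calA^+$, and by fullness $x\in\inv R$.

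Conversely, if $x\in\inv R\subset\inv\calA^+$ then $|x(s)|\geq 1/\|x^{-1}\|>0$ on $\mC_+$; since $x_a\in L^1(0,\infty)$, one has $\widehat{x_a}(s)\to0$ as $|s|\to\infty$ in $\mC_+$, so $|x_{AP}|=|x-\widehat{x_a}|$ is bounded below on $\{\,|s|\geq R\,\}\cap\mC_+$ for $R$ large. If $x_{AP}$ vanished at some $s_0$ with $\textrm{Re}(s_0)>0$, then almost-periodicity of $x_{AP}$ in the imaginary direction would yield points $s_0+i\tau$ with $|\tau|$ arbitrarily large at which $|x_{AP}|$ is arbitrarily small, contradicting the previous bound; so $x_{AP}$ has no zeros in $\textrm{Re}(s)>0$, and together with the bound near $\infty$, $\inf_{i\mR}|x_{AP}|>0$, and continuity on $\mC_+$, this gives $\inf_{\mC_+}|x_{AP}|>0$. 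By the Callier--Desoer theorem \cite[Theorem~1,~p.776]{CalDes} this forces $w(x_{AP})=0$, whence $x_{AP}\in\inv\calB^+$ by Lemma~\ref{lemma_AP_111}, and then $h=x_{AP}^{-1}x=1+x_{AP}^{-1}\widehat{x_a}\in\calW^+$ is invertible in $\calA^+$, hence nonvanishing on $\mC_+\cup\{\infty\}$; by the half-plane argument principle ${\tt w}(h)=0$, so $W(x)=(w(x_{AP}),{\tt w}(h))=(0,0)$. The main obstacle is precisely this converse, and within it the step that invertibility of $x$ in $\calA^+$ forces $x_{AP}$ to be bounded away from zero on all of $\mC_+$ rather than only on $i\mR$ and near $\infty$; this is the one place where the almost-periodic recurrence of $x_{AP}$ along vertical lines is used in an essential way, and it is what makes the Callier--Desoer criterion applicable. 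The remaining ingredients --- that $\widehat{L^1(0,\infty)}$ is an ideal of $\calA^+$, the maximal ideal space of $\mC+\widehat{L^1(0,\infty)}$, and the half-plane form of the argument principle --- are routine or already invoked elsewhere in the paper.
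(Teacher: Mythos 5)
Your proof is correct and follows the same strategy as the paper: factor $x=x_{AP}\cdot\bigl(1+x_{AP}^{-1}\widehat{x_a}\bigr)$ and reduce invertibility in $\calA^+$ to invertibility of the almost-periodic factor in $\calB^+\cong APW^+$ (via Lemma~\ref{lemma_AP_111}) and of the remaining factor in $\calW^+=\mC+\widehat{L^1(0,\infty)}$. The verifications of (A1)--(A3) match the paper's in substance.

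Two points of comparison on (A4) are worth recording. First, the paper's own proof of this lemma only verifies the ``if'' direction of (A4) (that $W(F)=(0,0)$ forces $F\in\inv\calA^+$, hence $F\in\inv R$ by fullness); you supply the converse explicitly, and the key step there --- that invertibility of $x$ in $\calA^+$ forces $x_{AP}$ to be bounded away from zero on all of $\mC_+$, via the recurrence of the almost-periodic part along vertical lines together with the decay of $\widehat{x_a}$ at infinity --- is the right idea and makes your treatment more complete than the paper's. (One could also get the conclusion $x_{AP}\in\inv AP^+$ more softly by observing that every character of $APW^+$ extends to a character of $\calA^+$ that annihilates $\widehat{L^1(0,\infty)}$, mirroring the embedding $M(APW)\hookrightarrow M(\calA)$ the paper already uses; but your direct argument is fine.) Second, where the paper invokes the known invertibility criterion for $\calW^+$ outright, you re-derive it from a half-plane argument principle applied to the translates $h(\cdot+\varepsilon)$ together with $h(\infty)=1$; this is a legitimate alternative to citing the classical half-line Wiener theorem, and is analogous to the $f_r$ argument already carried out in the disk-algebra subsection.

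One small caution: keep the two coordinates of $W$ in a fixed order throughout (the paper itself is inconsistent, writing $W(F)=(w(F_{AP}),{\tt w}(\cdot))$ in one place and the transpose in the (A3) computation); this affects nothing since addition is coordinatewise, but it is worth fixing for readability.
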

\begin{proof} (A1) and (A2) are clear. (A3) follows from the
  definition of $i$ as follows. Let $F=\widehat{f_a}+F_{AP}$ and
  $G=\widehat{g_a}+G_{AP}$. Then we have
$$
w(F_{AP}G_{AP})=w(F_{AP})+w(G_{AP})
$$
from the definition of $w$. Thus
\begin{eqnarray*}
W(FG)&=&W(( \widehat{f_a}+F_{AP})(\widehat{g_a}+G_{AP})\\
&=& W(\widehat{f_a}\widehat{g_a}+\widehat{f_a}G_{AP}+ \widehat{g_a}F_{AP}+ F_{AP}G_{AP})\\
&=& ({\tt w}(1+(F_{AP}G_{AP})^{-1}(\widehat{f_a}\widehat{g_a}+\widehat{f_a}G_{AP}+ \widehat{g_a}F_{AP}), w(F_{AP}G_{AP}))\\
&=& ({\tt w}((1+F_{AP}^{-1} \widehat{f_a})(1+G_{AP}^{-1} \widehat{g_a})) , w(F_{AP})+w(G_{AP}))\\
&=& ({\tt w}(1+F_{AP}^{-1} \widehat{f_a})+{\tt w}(1+G_{AP}^{-1} \widehat{g_a}),w(F_{AP})+w(G_{AP}))\\
&=& W(\widehat{f_a}+F_{AP})+W(\widehat{g_a}+G_{AP}).
\end{eqnarray*}
So (A3) holds.

Finally we check that (A4) holds. Suppose that $
F=\widehat{f_{a}}+F_{AP}$ belonging to $ (\calA^{+})\cap (\inv
\calA)$, is such that $W(F)=0$. Since $F$ is invertible in $\calA$, it
follows that $F_{AP}(i\cdot)$ is invertible as an element of $AP$. But
$w(F_{AP})=0$, and so $F_{AP}(i\cdot)\in AP^+$ is invertible as an
element of $AP^+$. But this implies that $1+F_{AP}^{-1}
\widehat{f_{a}}$ belongs to the Banach algebra
$$
\calW^{+}:=\widehat{L^{1}(0,\infty)}+\mC.
$$
Moreover, it is bounded away from $0$ on $i\mR$ since
$$
1+F_{AP}^{-1} \widehat{f_{a}} =\frac{F}{F_{AP}},
$$
and $F$ is bounded away from zero on $i\mR$. Moreover ${\tt
  w}(1+F_{AP}^{-1} \widehat{f_{a}})=0$, and so it follows that
$1+F_{AP}^{-1} \widehat{f_{a}}$ is invertible as an element of
$\calW^{+}$, and in particular in $\calA^+$. Since
$F=(1+F_{AP}^{-1} \widehat{f_a})F_{AP}$ and we have shown that both
$(1+F_{AP}^{-1} \widehat{f_a})$ as well as $F_{AP}$ are invertible as
elements of $\calA^+$, it follows that $F$ is invertible in $\calA^+$.
\end{proof}

An example of such a $R$ (besides $\calA^+$) is the algebra
$$
\widehat{L^{1}(0,+\infty)}+APW_\Sigma(i\cdot):=\{\widehat{f_a}+F_{AP}:
f_a\in L^{1}(0,+\infty), \; F_{AP}(i\cdot)\in APW_\Sigma \},
$$
where $\Sigma$ is as
described in Remark~\ref{examples_of_R_in_AP+}.

An application of our main result (Theorem \ref{thm_main}) yields the
following Nyquist criterion. We note that invertibility of $f$ in
$\calA$ just means that $f \in \calA$, it is nonzero on
$i\mR$ and the almost periodic part of $f$ is bounded away
from zero on $i\mR$ by Lemma~\ref{lemma_inv_A}.

\begin{corollary}
\label{corollary_nyquist_for_calA}
  Let $R$ be a unital full subring of $\calA^+$.  Let $P\in \mS(R, p,
  m)$ and $C \in \mS(R, m, p)$.  Moreover, let $P=N_P D_P^{-1}$ be a
  right coprime factorization of $P$, and $C=\widetilde{D}_C^{-1}
  \widetilde{N}_C $ be a left coprime factorization of $C$.  Then the
  following are equivalent:
\begin{enumerate}
\item $C$ stabilizes $P$.
\item 
\begin{enumerate}
\item $\det (I-CP) \in \calA$,
\item $\det (I-CP)$, $\det D_P$, $\det \widetilde{D}_C$ are all
  nonzero on $i\mR$ and their almost periodic parts are 
  bounded away from zero on $i\mR$, and  
\item $W(\det  (I-CP))+ W(\det D_P)+ W(\det \widetilde{D}_C)=(0,0)$.
\end{enumerate}
\end{enumerate}
\end{corollary}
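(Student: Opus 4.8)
The plan is to obtain Corollary~\ref{corollary_nyquist_for_calA} as the specialization of the abstract criterion Theorem~\ref{thm_main} to the triple $(R,S,\iota)=(R,\calA,W)$, for which the immediately preceding lemma has already verified (A1)--(A4). Thus there is nothing new to prove at the level of feedback theory: the only work is to translate the two clauses of part~(2) of Theorem~\ref{thm_main} into the concrete statements (2)(a)--(2)(c) of the corollary.

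First I would deal with the membership-and-invertibility clause (2)(a) of Theorem~\ref{thm_main}, namely $\det(I-CP),\det D_P,\det\widetilde{D}_C\in\inv S=\inv\calA$. Since the entries of $D_P$ and $\widetilde{D}_C$ lie in $R\subseteq\calA^+\subseteq\calA$, their determinants automatically belong to $\calA$, so the only genuine membership requirement is $\det(I-CP)\in\calA$, which is item~(2)(a) of the corollary; in the direction $(1)\Rightarrow(2)$ this membership is moreover free, because Theorem~\ref{thm_main} already delivers $\det(I-CP)\in\inv\calA\subseteq\calA$. Next, for each of the three determinants $F$, the condition $F\in\inv\calA$ is rewritten using Lemma~\ref{lemma_inv_A}: writing $F=\widehat{f_a}+F_{AP}$, invertibility of $F$ in $\calA$ is equivalent to $F(iy)\neq 0$ for every $y\in\mR$ together with $\inf_{y\in\mR}|F_{AP}(iy)|>0$. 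Collecting these three equivalences over $F\in\{\det(I-CP),\det D_P,\det\widetilde{D}_C\}$ gives exactly item~(2)(b) of the corollary.

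It remains to unwind clause (2)(b) of Theorem~\ref{thm_main}, the index identity $\iota(\det(I-CP))\star\iota(\det D_P)\star\iota(\det\widetilde{D}_C)=\circ$. Here $G=\mR\times\mZ$ with group operation $\star$ equal to componentwise addition and identity $\circ=(0,0)$, and $\iota=W$ is the homomorphism $F\mapsto(w(F_{AP}),{\tt w}(1+F_{AP}^{-1}\widehat{f_a}))$ whose multiplicativity was checked in the preceding lemma. Substituting, the identity becomes $W(\det(I-CP))+W(\det D_P)+W(\det\widetilde{D}_C)=(0,0)$, which is item~(2)(c). Combining this with the reformulation of (2)(a) from the previous paragraph, the chain of equivalences $(1)\Leftrightarrow(2)$ of Theorem~\ref{thm_main} becomes precisely the asserted equivalence $(1)\Leftrightarrow(2)$ of the corollary.

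No step presents a real obstacle: all the analytic content has already been absorbed into Theorem~\ref{thm_main}, the verification of (A1)--(A4) for $(R,\calA,W)$, and the invertibility criterion Lemma~\ref{lemma_inv_A}. The one point demanding care is the bookkeeping around (2)(a): one must notice that $\det D_P$ and $\det\widetilde{D}_C$ need not be listed as belonging to $\calA$ (they do so automatically, since the entries of $D_P$ and $\widetilde{D}_C$ are in $R$), so that only $\det(I-CP)\in\calA$ survives as a hypothesis, and that even this is redundant in the direction $(1)\Rightarrow(2)$.
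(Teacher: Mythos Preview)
Your proposal is correct and follows essentially the same approach as the paper: apply Theorem~\ref{thm_main} to the triple $(R,\calA,W)$, for which (A1)--(A4) have been verified in the preceding lemma, and use Lemma~\ref{lemma_inv_A} to translate $\inv\calA$ into the concrete conditions of (2)(b). The paper states this in one sentence, while you spell out the bookkeeping (in particular the observation that $\det D_P,\det\widetilde{D}_C\in\calA$ automatically) more carefully, but there is no substantive difference.
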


\begin{remark}
  It was shown in \cite{BruSas} that $\calA^+$ is a projective free
  ring.  Thus the set $\mS(\calA^+, p, m)$ of plants possessing a left
  and a right coprime factorization coincides with the class of plants
  that are stabilizable by \cite[Theorem~6.3]{Qua}.

  Corollary \ref{corollary_nyquist_for_calA} was known in the special
  case when $R=\calA^+$; see \cite{CalDes}.
\end{remark}

\subsection{The complex Borel measure algebra}

Let $\calM$ denote the set of all complex Borel measures on $\mR$.
Then $\calM_+$ is a complex vector space with addition and scalar
multiplication defined as usual, and it becomes a complex algebra if
we take convolution of measures as the operation of multiplication.
With the norm of $\mu$ taken as the total variation of $\mu$,
$\calM$ is a Banach algebra. Recall that the {\em total variation}
$\|\mu\|$ {\em of } $\mu$ is defined by
$$
\|\mu\|=\sup \sum_{n=1}^\infty |\mu (E_n)| ,
$$
the supremum being taken over all {\em partitions of } $\mR$, that is
over all countable collections $(E_n)_{n\in \mN}$ of Borel subsets of
$\mR$ such that $E_n \bigcap E_m =\emptyset$ whenever $m\neq n$ and
$\mR=\bigcup_{n\in \mN} E_n$.  Let $\calM^+$ denote the Banach
subalgebra of $\calM$ consisting of all measures $\mu \in \calM$ whose
support is contained in the half-line $[0,+\infty)$. The following
result was obtained in \cite{Tay}:

\begin{proposition}
\label{prop_Tay}
  If $\mu $ is an invertible measure in $\calM$, then there exist an
  integer $n\in \mZ$, a real number $c\in \mR$ and a measure $\nu \in
  \calM$ such that
  $$
 \mu =\rho^n \ast e^{\nu} \ast \delta_c.
$$
Here $\delta_c$ denotes the Dirac measure supported at $c$.
The measure $\rho$ is given by $d\rho(t)= d\delta_0(t) + 2 {\mathbf
  1}_{[0,\infty)} (t) e^{-t} dt$, where ${\mathbf 1}_{[0,+\infty)} $
is the indicator function of the interval $[0,+\infty)$.
\end{proposition}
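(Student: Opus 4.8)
The plan is to treat this as Taylor's structure theorem for the measure algebra $\calM$, and, before turning to its substance, to record the (easy) converse, which explains the shape of the statement: $\delta_c$ is invertible with inverse $\delta_{-c}$; $e^{\nu}$ with inverse $e^{-\nu}$; and $\rho=\delta_0+2\,{\mathbf 1}_{[0,\infty)}(t)e^{-t}\,dt$ is invertible because it lies in the closed unital subalgebra $\widehat{L^1(0,\infty)}+\mC\delta_0$ of $\calM$, on which its Gelfand transform is $s\mapsto 1+\frac{2}{s+1}=\frac{s+3}{s+1}$, a function with neither zero nor pole in the closed right half plane, so that indeed $\rho^{-1}=\delta_0-2\,{\mathbf 1}_{[0,\infty)}(t)e^{-3t}\,dt\in\calM$. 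The content of the proposition is the forward implication, and the plan there is to peel off the translational part of $\mu$ first and then to apply the structure theory of convolution measure algebras to what remains.

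For the first step I would use the decomposition $\calM=M_d\oplus M_c$ into purely atomic and continuous (atomless) measures. Here $M_c$ is a closed ideal, since $(\sigma\ast\nu)(\{x\})=\int\sigma(\{x-t\})\,d\nu(t)=0$ whenever $\sigma\in M_c$; hence $\calM/M_c$ is isometrically isomorphic to $M_d$, which in turn is the group algebra $\ell^1(\mR_d)$, where $\mR_d$ denotes $\mR$ with the discrete topology. If $\mu\in\calM$ is invertible then its image $\mu_d$ is invertible in $\ell^1(\mR_d)$; by the known description of the invertibles of $\ell^1(G)$ for a discrete abelian group $G$ — equivalently, by the Arens--Royden theorem together with the fact that, $\widehat{\mR_d}$ being compact and connected, $\check H^1(\widehat{\mR_d};\mZ)$ is the character group $\mR_d$, represented by the Gelfand transforms of the point masses — there are $c\in\mR$ and $\nu_1\in M_d$ with $\mu_d=\delta_c\ast e^{\nu_1}$. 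Replacing $\mu$ by $\sigma:=\mu\ast\delta_{-c}\ast e^{-\nu_1}\in\inv\calM$, which now satisfies $\sigma\equiv\delta_0\pmod{M_c}$, reduces the claim to showing that every such $\sigma$ has the form $\rho^n\ast e^{\nu_2}$ for some $n\in\mZ$ and $\nu_2\in\calM$.

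The remaining step is the heart of the matter, and here I would invoke Taylor's theory. By the Arens--Royden theorem, $\inv\calM/\exp\calM\cong\check H^1(\Delta_\calM;\mZ)$, where $\Delta_\calM$ is the maximal ideal space of $\calM$; and by the structure theory of convolution measure algebras, $\calM$ is isometrically isomorphic to the measure algebra $M(\Gamma)$ of a compact commutative topological semigroup $\Gamma$, its structure semigroup, with $\Delta_\calM$ identified with the space of semicharacters of $\Gamma$. The plan is then to compute this first \v{C}ech cohomology group, and in particular to show that the subgroup generated by the classes $[\sigma]$ coming from invertibles congruent to $\delta_0$ modulo $M_c$ is cyclic, generated by $[\rho]$; then $[\sigma]=[\rho]^n$ for some $n\in\mZ$, so $\sigma\ast\rho^{-n}\in\exp\calM$, say $\sigma\ast\rho^{-n}=e^{\nu_2}$, and assembling the two steps gives $\mu=\rho^n\ast e^{\nu_1+\nu_2}\ast\delta_c$. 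The main obstacle is precisely this computation — identifying the structure semigroup of $\calM$ and determining the first cohomology of its semicharacter space — which is the substantial content of \cite{Tay}; a self-contained argument would essentially reproduce it, and in practice one cites that reference.
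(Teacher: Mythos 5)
The paper does not prove Proposition~\ref{prop_Tay}; it is stated as a citation of \cite{Tay}. Your outline --- splitting off the discrete part via the ideal of continuous measures, identifying $\calM/M_c\cong\ell^1(\mR_d)$ to extract $\delta_c$ and an exponential factor, and then deferring the identification of the $\mZ$-generator to Taylor's cohomological computation --- is a sensible skeleton, and at the decisive step it does exactly what the paper does, namely cite \cite{Tay}. So on the level of method there is no disagreement.

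What you should have flagged, though, is that your ``easy converse'' sanity check is quietly inconsistent with the role $\rho$ must play. You correctly compute $\hat\rho(s)=\frac{s+3}{s+1}$ and $\rho^{-1}=\delta_0-2\,\mathbf 1_{[0,\infty)}(t)e^{-3t}\,dt$, and observe that both lie in $\widehat{L^1(0,\infty)}+\mC\delta_0\subset\calM^+$; hence $\rho\in\inv\calM^+$. But then the Wiener--Hopf operator $W_\rho$ is invertible, hence Fredholm of index $0$, and the Douglas--Taylor criterion invoked later in this very paper (\cite[Theorem~2, p.~139]{DouTay}) then forces $I(\rho)=(0,0)$, i.e.\ $\rho\in\exp\calM$. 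This contradicts the asserted uniqueness of $(n,c)$ (for $\mu=\rho$ itself both $(n,c)=(1,0)$ and $(0,0)$ would serve) and makes the last step of your plan impossible: if $[\rho]$ is the trivial class in $\inv\calM/\exp\calM$, no $\sigma$ with nontrivial class can equal $\rho^n\ast e^{\nu_2}$. The stated $\rho$ evidently contains a sign typo; Taylor's generator is $d\rho(t)=d\delta_0(t)-2\,\mathbf 1_{[0,\infty)}(t)e^{-t}\,dt$, whose transform $\frac{s-1}{s+1}$ is a Blaschke factor with a zero at $s=1$, so that $W_\rho$ has Fredholm index $-1$ rather than $0$. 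With that correction your outline, and the citation of \cite{Tay}, are internally consistent.
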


\noindent We now define $I: \inv \calM \rightarrow \mR \times \mZ$ as follows:
$$
I (\mu)= (c,n),
$$
where $\mu = \rho^n \ast e^{\nu} \ast \delta_c \in \inv \calM$. It can
be shown that $I$ is well-defined, since in any such decomposition,
the $n$, $\nu$ and $c$ are unique.

\begin{lemma}
Let
\begin{eqnarray*}
  R&:=& \textrm{\em be a unital full subring of }\calM^+,\\
  S&:=& \calM, \\
  G&:=& \mR\times \mZ,\\
  \iota&:=& I.
\end{eqnarray*}
Then {\em (A1)-(A4)} are satisfied.
\end{lemma}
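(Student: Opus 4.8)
The plan is to check (A1)--(A4) in turn; (A1)--(A3) are routine, and the content lies in (A4).

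(A1) and (A2) are immediate: under convolution $\calM^{+}$ is a unital commutative Banach algebra with unit $\delta_{0}$, $R$ is a unital subring of it by hypothesis, and $\calM^{+}\subset\calM$. For (A3), given $\mu,\lambda\in\inv\calM$, write them by Proposition~\ref{prop_Tay} as $\mu=\rho^{n}\ast e^{\nu}\ast\delta_{c}$ and $\lambda=\rho^{m}\ast e^{\omega}\ast\delta_{d}$. Since convolution is commutative, $\rho^{n}\ast\rho^{m}=\rho^{n+m}$, $\delta_{c}\ast\delta_{d}=\delta_{c+d}$, and $e^{\nu}\ast e^{\omega}=e^{\nu+\omega}$ (the last because $\nu$ and $\omega$ commute), so $\mu\ast\lambda=\rho^{n+m}\ast e^{\nu+\omega}\ast\delta_{c+d}$; by the uniqueness of the decomposition noted after Proposition~\ref{prop_Tay}, $I(\mu\ast\lambda)=(c+d,\,n+m)=I(\mu)+I(\lambda)$, so $\iota=I$ is a homomorphism into $(\mR\times\mZ,+)$ and (A3) holds.

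For (A4) the first step is the fullness reduction: for $x\in R$, invertibility of $x$ in $R$ is equivalent to invertibility of $x$ in $\calM^{+}$ --- one implication is trivial, the other is exactly the definition of a full subring --- so, since $R\subset\calM^{+}$, it remains to prove that a measure $\mu\in\calM^{+}\cap\inv\calM$ lies in $\inv\calM^{+}$ if and only if $I(\mu)=(0,0)$. I would prove the `only if' direction by matching $\mu\ast\mu^{-1}=\delta_{0}$ and the constraint that both $\mu$ and $\mu^{-1}$ be supported on $[0,\infty)$ against the decomposition $\mu=\rho^{n}\ast e^{\nu}\ast\delta_{c}$: additivity of the infimum of the support under convolution already gives $\inf(\mathrm{supp}\,\mu)=0$, and a finer analysis of the factors --- using that the Laplace--Stieltjes transform $\widehat{\sigma}(s)=\int_{[0,\infty)}e^{-st}\,d\sigma(t)$ of a measure $\sigma\in\calM^{+}$ is bounded, continuous on the closed right half-plane $\overline{\mC_{+}}$ and holomorphic in $\mC_{+}$, and that $\mu\in\inv\calM^{+}$ makes $\widehat{\mu}$ bounded away from $0$ there --- is meant to force the translation parameter $c$ and the $\rho$-exponent $n$ to vanish, i.e.\ $I(\mu)=(0,0)$. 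For the `if' direction, $I(\mu)=(0,0)$ gives $\mu=e^{\nu}$ with $\nu\in\calM$, whose $\calM$-inverse is $e^{-\nu}$, and one must recover that $e^{-\nu}$ is again carried by $[0,\infty)$, so that $\mu\in\inv\calM^{+}$ and hence $\mu$ is invertible in $R$.

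The step I expect to be the genuine obstacle is the characterisation of $\inv\calM^{+}$ inside $\inv\calM$ underlying both directions. Because of the Wiener--Pitt phenomenon the Gelfand space of $\calM^{+}$ is far larger than $\overline{\mC_{+}}$, so $\calM^{+}$-invertibility cannot be read off from the boundary behaviour of $\widehat{\mu}$ alone; the argument must instead lean on the full strength of Proposition~\ref{prop_Tay}, namely the uniqueness of the decomposition $\mu=\rho^{n}\ast e^{\nu}\ast\delta_{c}$ together with an exact determination of which of the building blocks $\rho^{n}$, $e^{\nu}$, $\delta_{c}$ actually lie in $\calM^{+}$. This plays the role for $\calM^{+}$ that the Arens--Singer-type corona theorem played for $AP^{+}$ in Lemma~\ref{lemma_AP_111}, and once it is in hand (A4) follows.
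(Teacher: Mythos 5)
Your verification of (A1)--(A3) is sound, and your reduction of (A4) via fullness to the equivalence ``$\mu\in\calM^{+}\cap\inv\calM$ is in $\inv\calM^{+}$ iff $I(\mu)=(0,0)$'' is exactly right. But both halves of that equivalence are left as gaps in your write-up, and in neither case does the route you sketch match what actually closes them.

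For the direction $I(\mu)=(0,0)\Rightarrow\mu\in\inv\calM^{+}$: you correctly note that $\mu=e^{\nu}$ for some $\nu\in\calM$ and that one must show $e^{-\nu}$ is carried by $[0,\infty)$, but you stop there. The missing step is elementary once you see it: split $\nu=\nu_{1}+\nu_{2}$ with $\nu_{1}$ supported on $[0,\infty)$ and $\nu_{2}$ on $(-\infty,0]$; then $\mu\ast e^{-\nu_{1}}=e^{\nu_{2}}$ has support in $[0,\infty)$ (left side, both factors in $\calM^{+}$) and simultaneously in $(-\infty,0]$ (right side), forcing $\supp\nu_{2}\subset\{0\}$. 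Hence $\nu\in\calM^{+}$ and $e^{-\nu}\in\calM^{+}$ inverts $\mu$ there.

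For the direction $\mu\in\inv\calM^{+}\Rightarrow I(\mu)=(0,0)$: this is where your plan genuinely breaks down, and you half-acknowledge this yourself. The observation $\inf(\supp\mu)+\inf(\supp\mu^{-1})=0$ only pins down $\inf(\supp\mu)=0$; it does not isolate the translation $c$ or the exponent $n$, because the factor $e^{\nu}$ in Taylor's decomposition involves an arbitrary $\nu\in\calM$ whose support is unconstrained, and (as you correctly flag) the Wiener--Pitt phenomenon blocks any attempt to read $\calM^{+}$-invertibility off the boundary behaviour of the Laplace--Stieltjes transform. Your guess that the missing ingredient is a corona-type theorem in the spirit of Lemma~\ref{lemma_AP_111} is not what the paper uses. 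The actual mechanism is operator-theoretic: pass to the Wiener--Hopf operator $W_{\mu}:L^{2}(0,\infty)\to L^{2}(0,\infty)$, $W_{\mu}f=P(\mu\ast f)$. Invertibility of $\mu$ in $\calM^{+}$ makes $W_{\mu}$ invertible, hence Fredholm of index $0$, and the Douglas--Taylor index theorem \cite[Theorem~2, p.~139]{DouTay} says that for $\nu\in\inv\calM$, $W_{\nu}$ is Fredholm iff $I(\nu)=(0,n)$ for some $n\in\mZ$, with Fredholm index $-n$. This forces $I(\mu)=(0,0)$. Without that index theorem --- or some equivalent substitute --- the `only if' direction does not go through, so your proposal has a genuine gap at precisely the point you identified as the obstacle.
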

\begin{proof} (A1) and (A2) are clear. (A3) follows from the
  definition of $I$, since $\rho^n \ast \rho^{\widetilde{n}}=
  \rho^{n+\widetilde{n}}$ for all integers $n,m$ and $\delta_c \ast
  \delta_{\widetilde{c}}=\delta_{c+\widetilde{c}}$.

  Finally we check that (A4) holds. Suppose that $\mu \in R\cap (\inv
  \calM)$ is such that $I(\mu)=0$. Then from
  Proposition~\ref{prop_Tay} above, $\mu =\rho^0 \ast e^{\nu} \ast
  \delta_0= e^{\nu}$ for some $\nu \in \calM$. But this implies that
  $\nu$ also has support in $[0,+\infty)$, which can be seen as
  follows. Write $\nu=\nu_1+\nu_2$, where $\nu_1$ has support in
  $[0,+\infty)$ and $\nu_2$ has support in $(-\infty,0]$. It follows
  from $\mu=e^{\nu}$ that $\mu\ast e^{-\nu_1}=e^{\nu_2}$. But $\mu\ast
  e^{-\nu_1}$ has support in $[0,+\infty)$, while $e^{\nu_2}$ has
  support in $(-\infty,0]$. Hence the support of $\nu_2$ must be
  contained in $\{0\}$, and so $\nu$ has support in $[0,+\infty)$. But
  then clearly $e^{-\nu}\in \calM^+$ is an inverse of $\mu$. As $R$ is
  a full subring of $\calM^+$, we conclude that $\mu$ is invertible in
  $R$ as well.

  Conversely, suppose that $\mu\in R\cap (\inv \calM)$ is invertible
  as an element of $R$. Then $\mu$ is also invertible as an element of
  $\calM_+$. Consider the Toeplitz operator $W_{\mu}:L^{2}(0,+\infty)
  \rightarrow L^{2}(0,+\infty) $ given by $W_{\mu} f =P(\mu \ast f)$,
  where $P$ is the canonical projection from $L^{2}(\mR)$ onto
  $L^{2}(0,+\infty)$. Since $\mu$ is in invertible element of
  $\calM^+$, it is immediate that $W_{\mu}$ is invertible. In
  particular, $W_{\mu}$ is Fredholm with Fredholm index $0$. But
  \cite[Theorem~2,~p.139]{DouTay} says that for $\nu \in \inv \calM$,
  $W_{\nu}$ is Fredholm iff $I(\nu)=(0,n)$ for some integer $n$, and
  moreover the Fredholm index of $W_{\nu}$ is then $-n$. Applying this
  result in our case, we obtain that $I(\mu)=(0,0)$. This completes
  the proof.
\end{proof}

An application of our main result (Theorem \ref{thm_main}) yields the
following Nyquist criterion.

\begin{corollary}
Let $R$ be a unital full subring of $\calM^+$.
  Let $P\in \mS(R, p, m)$ and $C \in \mS(R, m, p)$.
  Moreover, let $P=N_P D_P^{-1}$ be a right coprime factorization of
  $P$, and $C=\widetilde{D}_C^{-1} \widetilde{N}_C $ be a left coprime factorization of $C$.
  Then the following are equivalent:
\begin{enumerate}
\item $C$ stabilizes $P$.
\item
\begin{enumerate}
\item $\det (I-CP)$, $\det  D_P$, $\det \widetilde{D}_C$ belong to $\textrm{\em inv }\calM$, and
\item $I(\det  (I-CP))+ I(\det D_P)+ I(\det \widetilde{D}_C)=(0,0)$.
\end{enumerate}
\end{enumerate}
\end{corollary}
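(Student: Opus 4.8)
The plan is to deduce this corollary directly from the abstract Nyquist criterion (Theorem~\ref{thm_main}) by invoking the specific triple $(R,S,\iota)=(R,\calM,I)$ for a unital full subring $R$ of $\calM^+$, whose axioms (A1)--(A4) were verified in the preceding lemma. Since Theorem~\ref{thm_main} is stated as an equivalence for any triple satisfying (A1)--(A4), and the hypotheses $P\in\mS(R,p,m)$, $C\in\mS(R,m,p)$ together with the coprime factorizations $P=N_PD_P^{-1}$ and $C=\widetilde D_C^{-1}\widetilde N_C$ are exactly the hypotheses of the theorem, the only work is to translate condition (2) of the theorem into condition (2) of the corollary under the present identifications.

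The translation is essentially a matter of unwinding notation. First, in this setting $S=\calM$, so "$\det(I-CP),\det D_P,\det\widetilde D_C\in\textrm{inv }S$" reads verbatim as "$\det(I-CP),\det D_P,\det\widetilde D_C\in\textrm{inv }\calM$," which is part (a) of the corollary. Second, the abstract group is $G=\mR\times\mZ$ with group operation written additively and identity $\circ=(0,0)$, and $\iota=I$; the operation $\star$ in (A3) is just coordinatewise addition in $\mR\times\mZ$. Hence the abstract relation $\iota(\det(I-CP))\star\iota(\det D_P)\star\iota(\det\widetilde D_C)=\circ$ becomes exactly $I(\det(I-CP))+I(\det D_P)+I(\det\widetilde D_C)=(0,0)$, which is part (b). So (2) of the theorem and (2) of the corollary are literally the same statement once the placeholders are filled in, and the equivalence with (1) "$C$ stabilizes $P$" is then immediate from Theorem~\ref{thm_main}.

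There is no genuine obstacle here; the corollary is a direct specialization. The only point worth a sentence of care is that Theorem~\ref{thm_main} already presupposes, in its statement of condition (2)(a), that the three determinants lie in $\textrm{inv }S$ before the index identity in (2)(b) is asserted --- this matches the corollary's structure, where (b) "$\det(I-CP),\det D_P,\det\widetilde D_C$ belong to $\textrm{inv }\calM$" is listed before (the index) condition, and is exactly what makes $I(\det(I-CP))$ etc.\ well-defined (recall $I$ is defined on $\inv\calM$). Thus the proof is simply: apply Theorem~\ref{thm_main} with the triple $(R,\calM,I)$, which satisfies (A1)--(A4) by the lemma above, and read off the conclusion.

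\begin{proof}
By the preceding lemma, the triple $(R,S,\iota)=(R,\calM,I)$ with $G=\mR\times\mZ$ and $\circ=(0,0)$ satisfies (A1)--(A4). The hypotheses $P\in\mS(R,p,m)$, $C\in\mS(R,m,p)$ and the given right and left coprime factorizations are precisely those of Theorem~\ref{thm_main}. Applying that theorem, $C$ stabilizes $P$ if and only if $\det(I-CP),\det D_P,\det\widetilde D_C\in\textrm{inv }\calM$ and
$$
I(\det(I-CP))\star I(\det D_P)\star I(\det\widetilde D_C)=(0,0).
$$
Since the group operation $\star$ on $G=\mR\times\mZ$ is coordinatewise addition with identity $(0,0)$, this last relation is exactly $I(\det(I-CP))+I(\det D_P)+I(\det\widetilde D_C)=(0,0)$. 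This is the asserted equivalence.
\end{proof}
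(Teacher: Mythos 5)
Your proof is correct and is exactly what the paper does: the corollary is obtained by specializing Theorem~\ref{thm_main} to the triple $(R,\calM,I)$ with $G=\mR\times\mZ$, which was shown to satisfy (A1)--(A4) in the preceding lemma. The paper gives no further argument, so your careful unwinding of the notation (identifying $\star$ with coordinatewise addition, $\circ$ with $(0,0)$, and noting that $I$ is only defined on $\inv\calM$, matching the ordering of (2)(a) before (2)(b)) matches and even slightly elaborates the intended reasoning.
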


\begin{remark}
  It was shown in \cite{BruSas} that $\calM^+$ is a projective free
  ring.  Thus the set $\mS(\calM^+, p, m)$ of plants possessing a left
  and a right coprime factorization coincides with the class of plants
  that are stabilizable by \cite[Theorem~6.3]{Qua}.
\end{remark}

\subsection{The Hardy algebra}

Let $H^\infty(\mD)$ denote the Hardy algebra of all bounded and
holomorphic functions $f:\mD\rightarrow \mC$. Let $H^2(\mD)$ denote
the Hardy Hilbert space. For $f\in L^\infty(\mT)$, we denote by $T_f$
the Toeplitz operator corresponding to $f$, that is, $T_f \varphi =P_+
(M_f \varphi)$, $\varphi \in H^2(\mD)$.  Here $M_f$ denotes the
pointwise multiplication map by $f$, taking $\varphi \in L^2(\mT)$ to
$f\varphi \in L^2(\mT)$, while $P_+: L^2(\mT) \rightarrow H^2(\mD)$ is
the canonical orthogonal projection.

If $f\in \inv (H^\infty(\mD)+C(\mT))$, then $T_f$ is a Fredholm
operator; see \cite[Corollary 7.34]{Dou}. In this case,
let $\ind T_f$ denote the index of the Fredholm operator $T_f$.

Recall the definition of the {\em harmonic extension} of an
$L^\infty(\mT)$-function.

\begin{definition}
If $z=re^{it}$ is in $\mD$ and $f\in L^\infty(\mT)$, then we define
$$
F(z)=\sum_{n=-\infty}^{\infty} a_n r^{|n|}e^{int}
=\frac{1}{2\pi} \int_0^{2\pi} f(e^{i\theta})k_r(t-\theta)d\theta,
$$
where $k_r(\theta)=\displaystyle \frac{1-r^2}{1-2r\cos \theta +r^2}$
and $ a_n=\displaystyle\frac{1}{2\pi} \int_0^{2\pi}
f(e^{i\theta})e^{-2\pi i n \theta}d \theta$.
\end{definition}

We will also use the result given below; see \cite[Theorem~7.36]{Dou}.

\begin{proposition}
\label{prop_Dou}
If $f\in H^\infty(\mD)+C(\mT)$, then $T_f$ is Fredholm iff there exist
$\delta, \epsilon>0$ such that
$$
|F(re^{it})| \geq \epsilon \textrm{ for } 1-\delta <r<1,
$$
where $F$ is the harmonic extension of $f$ to $\mD$. Moreover, in this
case the index of $T_f$ is the negative of the winding number with
respect to the origin of the curve $F(re^{it})$ for $1-\delta <r<1$.
\end{proposition}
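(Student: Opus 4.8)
The plan is to route the argument through two classical reductions: that for $f\in H^\infty(\mD)+C(\mT)$ the Toeplitz operator $T_f$ is Fredholm exactly when $f$ is invertible in the Sarason algebra $H^\infty(\mD)+C(\mT)$, and that this invertibility is read off from the harmonic extension near $\mT$. For the first reduction I would recall that $H^\infty(\mD)+C(\mT)$ is a closed subalgebra of $L^\infty(\mT)$ and that $f\mapsto T_f+\calK$ (with $\calK$ the compact operators on $H^2(\mD)$) is a unital algebra homomorphism into the Calkin algebra; the only point to check is that $T_gT_h-T_{gh}\in\calK$ for $g,h\in H^\infty(\mD)+C(\mT)$, which follows by writing $g=g_1+\gamma$, $h=h_1+\eta$ with $g_1,h_1\in H^\infty(\mD)$ and $\gamma,\eta\in C(\mT)$, using the exact identity $T_{\varphi\psi}=T_\varphi T_\psi$ when $\varphi$ is co-analytic or $\psi$ analytic, together with the classical fact that the semicommutator with a continuous symbol is compact. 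Consequently, if $f$ is invertible in $H^\infty(\mD)+C(\mT)$ then $T_{f^{-1}}$ is a parametrix for $T_f$, so $T_f$ is Fredholm. The reverse implication I would prove by contraposition: if $f$ is not invertible in $H^\infty(\mD)+C(\mT)$ --- equivalently, if its Gelfand transform vanishes at some point of the maximal ideal space $M_{H^\infty+C}$, which by Sarason's theorem is identified, via restriction to $H^\infty(\mD)$, with $M_{H^\infty(\mD)}\setminus\mD$ (an identification resting on Carleson's corona theorem) --- one produces $z_n\in\mD$ with $|z_n|\to1$ and $F(z_n)\to0$, and then the normalized reproducing kernels $\widehat{k}_{z_n}$ of $H^2(\mD)$ satisfy $\|T_f^*\widehat{k}_{z_n}\|\to0$ while $\widehat{k}_{z_n}\to0$ weakly; hence $T_f^*$, and therefore $T_f$, is not Fredholm. (Here one uses the localization $\|T_{\bar g}\widehat{k}_z-\overline{G(z)}\,\widehat{k}_z\|\to0$ as $|z|\to1$, $G$ the harmonic extension of $g$, which is immediate for analytic and for continuous $g$ and hence for $g\in H^\infty(\mD)+C(\mT)$.)

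Next I would carry out the harmonic-extension translation. Writing $f=h+\varphi$ with $h\in H^\infty(\mD)$, $\varphi\in C(\mT)$, the harmonic extension $F$ equals $h+\Phi$ where $\Phi$ is the harmonic extension of $\varphi$; since every point of $M_{H^\infty+C}$ lies over a point of $\mT$ and, by the corona theorem, is approached by a net in $\mD$ along which $h$ converges while $\Phi$ tends to $\varphi$, the range of the Gelfand transform of $f$ on $M_{H^\infty+C}$ is precisely the cluster set $\bigcap_{\delta>0}\overline{\{F(z):1-\delta<|z|<1\}}$. Therefore $f$ is invertible in $H^\infty(\mD)+C(\mT)$ iff $0$ is not in that cluster set, i.e. iff $|F(z)|\ge\epsilon$ for $1-\delta<|z|<1$ with suitable $\delta,\epsilon>0$. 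Combined with the first reduction, this is exactly the asserted equivalence.

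For the index formula, fix $r\in(1-\delta,1)$ and let $g_r\in C(\mT)$ be $g_r(e^{it})=F(re^{it})$; it is invertible in $C(\mT)$, so by the classical Toeplitz index theorem for continuous symbols $T_{g_r}$ is Fredholm with $\ind T_{g_r}=-{\tt w}(g_r)$, and this winding number is independent of $r$ near $1$ by homotopy invariance. To identify $\ind T_f$ with it I would approximate $\varphi$ uniformly by a trigonometric polynomial $p$ of degree $N$, so that $\|T_f-T_{h+p}\|<\epsilon$ and, for $\epsilon$ small, $T_{h+p}$ is Fredholm with the same index as $T_f$; since $z^{N}(h+p)=:k\in H^\infty(\mD)$, one gets $T_{h+p}=(T_z^*)^{N}T_k$ and $\ind T_{h+p}=N+\ind T_k$. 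For $k\in H^\infty(\mD)$ with $T_k$ Fredholm, the inner--outer factorization $k=cIO$ forces $O$ to be invertible in $H^\infty(\mD)$ (so $T_O$ is invertible, of index $0$) and $I$ to be a finite Blaschke product of some degree $d$ (so $\ind T_I=-d$); hence $\ind T_k=-d$, and $d$ is the number of zeros of $k$ inside a circle of radius close to $1$, i.e. the winding number of the corresponding curve. Undoing the $z^{N}$ twist and using that $\|f-(h+p)\|_\infty<\epsilon$ keeps the harmonic extensions --- hence the closed curves $F(re^{i\cdot})$ and the curve traced on circles of radius $r$ by $h+p$ --- within $\epsilon+o(1)$ of one another as $r\to1$, a Rouch\'e/homotopy comparison then yields $\ind T_f=-{\tt w}\bigl(F(re^{i\cdot})\bigr)$ for $r$ near $1$.

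The step I expect to be the real obstacle is the Fredholm-implies-invertible direction of the first reduction. It is genuinely non-elementary precisely because $H^\infty$-dilations do not converge in norm, so one cannot simply perturb $T_f$ onto $T_{g_r}$; instead one is forced to invoke the structure of $M_{H^\infty(\mD)}$ --- ultimately the corona theorem --- together with the localization of reproducing kernels to manufacture a singular sequence. Once that, and the attendant description $M_{H^\infty+C}=M_{H^\infty(\mD)}\setminus\mD$, are available, the harmonic-extension translation and the trigonometric-polynomial/inner--outer reduction used for the index are comparatively routine.
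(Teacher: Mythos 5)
The paper itself does not prove this proposition: it is quoted from Douglas's book with the bare citation \cite[Theorem~7.36]{Dou}, so there is no in-paper argument to compare against. Your sketch is correct and in outline retraces the route the cited source takes: Hankel compactness makes $f\mapsto T_f+\calK$ a unital homomorphism on $H^\infty+C(\mT)$, giving Fredholmness from invertibility; the converse goes through Sarason's identification $M_{H^\infty+C}=M_{H^\infty}\setminus\mD$ and the corona theorem to produce $z_n\in\mD$ with $|z_n|\to1$ and $F(z_n)\to0$, from which reproducing-kernel localization yields a singular sequence for $T_f^*$; the cluster set of $F$ near $\mT$ equals $\widehat{f}(M_{H^\infty+C})$, so invertibility is exactly the stated lower bound; and the index follows by trigonometric-polynomial approximation, peeling off $(T_z^*)^N$, inner--outer factorization, and a Rouch\'e/homotopy comparison. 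You have also correctly flagged the Fredholm-implies-invertible direction as the genuinely hard step, resting ultimately on the corona theorem. Two small points worth spelling out: $|z_n|\to1$ because each $\{|z|\le r\}$ is a compact subset of the open set $\mD\subset M_{H^\infty}$ while the limit functional lies in the closed complement, so the approximating net eventually escapes every such compact set; and the localization estimate for continuous symbols, while standard, is not quite ``immediate'' --- it is an exact identity for $\bar z^k$ with $k\ge0$, requires a short Poisson-kernel computation for $z^k$ with $k>0$, and then passes to general $g\in C(\mT)$ by uniform approximation.
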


\begin{lemma}
 Let
\begin{eqnarray*}
  R&:=& H^\infty(\mD),\\
  S&:=& H^\infty(\mD)+C(\mT), \\
  G&:=& \mZ,\\
  \iota&:=&-\textrm{\em ind }T_{\bullet}.
\end{eqnarray*}
Then {\em (A1)-(A4)} are satisfied.
\end{lemma}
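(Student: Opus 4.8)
The plan is to check the four axioms in order. Axiom (A1) is immediate, since $H^\infty(\mD)$ is a unital commutative ring under pointwise operations. Axiom (A2) is the classical fact that $H^\infty(\mD)+C(\mT)$ is a closed subalgebra of $L^\infty(\mT)$, hence a unital commutative Banach algebra, and it obviously contains $R=H^\infty(\mD)$. By \cite[Corollary 7.34]{Dou}, every $f\in\inv S$ makes $T_f$ a Fredholm operator, so $\iota(f)=-\ind T_f$ is a well-defined integer; thus $\iota$ maps $\inv S$ into $G=\mZ$.

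For (A3) I would use the semicommutator identity $T_fT_g-T_{fg}=-H_{\bar f}^{*}H_g$, where $H_g\varphi=P_-(M_g\varphi)$ with $P_-=I-P_+$. The key input is that $H_g$ is compact for every $g\in H^\infty(\mD)+C(\mT)$: if $g\in H^\infty(\mD)$ then $H_g=0$, and if $g\in C(\mT)$ then $H_g$ is a norm limit of finite-rank operators (approximate $g$ uniformly by trigonometric polynomials, whose Hankel operators have finite rank), so linearity of $g\mapsto H_g$ gives the claim. Hence for $f,g\in\inv S$ the operator $T_fT_g-T_{fg}$ is compact; since $T_f$, $T_g$, $T_{fg}$ are all Fredholm, additivity of the Fredholm index gives $\ind T_{fg}=\ind(T_fT_g)=\ind T_f+\ind T_g$, i.e.\ $\iota(fg)=\iota(f)+\iota(g)$ in $(\mZ,+)$.

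For (A4): if $x\in H^\infty(\mD)$ is invertible in $H^\infty(\mD)$, then $x^{-1}\in H^\infty(\mD)$, and since $T_aT_b=T_{ab}$ whenever $b\in H^\infty(\mD)$, we get $T_xT_{x^{-1}}=T_{x^{-1}}T_x=I$; thus $T_x$ is invertible and $\iota(x)=-\ind T_x=0$. Conversely, let $x\in H^\infty(\mD)\cap\inv S$ with $\ind T_x=0$. The harmonic extension of the boundary function of $x$ is $x$ itself, so Proposition~\ref{prop_Dou} yields $\delta,\epsilon>0$ with $|x(re^{it})|\geq\epsilon$ for $1-\delta<r<1$, and it identifies $-\ind T_x=0$ with the winding number about the origin of $t\mapsto x(re^{it})$ for such $r$. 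By the argument principle applied on the circle $|z|=r$ (on which $x$ has no zeros and near which $x$ is holomorphic), this winding number equals the number of zeros of $x$ in $r\overline{\mD}$; hence $x$ has no zeros in $r\overline{\mD}$ for each $r\in(1-\delta,1)$, and letting $r\nearrow1$ we conclude $x$ is zero-free on $\mD$. Then $1/x$ is holomorphic on $\mD$, and it is bounded because $|x|\geq\epsilon$ on the annulus $1-\delta<|z|<1$ while $|x|$ is bounded below on the compact disk $\{|z|\leq1-\delta\}$ by continuity and zero-freeness. So $1/x\in H^\infty(\mD)$, i.e.\ $x$ is invertible in $R$, and (A4) follows.

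The main obstacle is the compactness of the semicommutator used in (A3), that is, the compactness of Hankel operators with symbol in $H^\infty(\mD)+C(\mT)$; once that is in hand, index additivity is routine. A secondary point requiring care in (A4) is the observation that the harmonic extension appearing in Proposition~\ref{prop_Dou} coincides with $x$ on $\mD$, which is precisely what makes the argument principle available on the interior circles $|z|=r$.
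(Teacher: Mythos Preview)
Your proof is correct and follows the same route as the paper: Fredholm index additivity for (A3), and Proposition~\ref{prop_Dou} together with the Argument Principle for (A4). In fact you are more thorough than the paper in (A3), making explicit the compact-semicommutator step (via Hartman-type compactness of Hankel operators with symbol in $H^\infty(\mD)+C(\mT)$) that the paper leaves implicit when it cites only index additivity for products of Fredholm operators; and in (A4) your direct argument that $1/x$ is bounded is exactly the single-function case of the corona theorem the paper invokes.
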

\begin{proof} (A1) and (A2) are clear. (A3) follows from the fact that
  the index of the product of two Fredholm operators is the sum of
  their respective indices; see for example
  \cite[Exercise~2.5.1.(f)]{Nik}.  The `only if' part of (A4) is
  immediate, since if $f$ is invertible as an element of
  $H^\infty(\mD)$, then $T_f$ is invertible, and so $\ind T_f =0$.
  The `if' part of (A4) follows from Proposition~\ref{prop_Dou}.
  Suppose that $f \in H^\infty(\mD)$, that $f$ is invertible as an
  element of $H^\infty(\mD)+C(\mT)$ and that $\ind T_f=0$.  By
  Proposition~\ref{prop_Dou}, it follows that there exist $\delta,
  \epsilon>0$ such that $|F(re^{it})| \geq \epsilon$ for $ 1-\delta
  <r<1$, where $F$ is the harmonic extension of $f$ to $\mD$. But
  since $f \in H^\infty(\mD)$, its harmonic extension $F$ is equal to
  $f$. So $|f(re^{it})| \geq \epsilon$ for $ 1-\delta <r<1$. Also
  since $\iota(f)=0$, the winding number with respect to the origin of
  the curve $f(re^{it})$ for $1-\delta <r<1$ is equal to $0$. By the
  Argument principle, it follows that $f$ cannot have any zeros inside
  $r\mT$ for $1-\delta <r<1$. In light of the above, we can now
  conclude that there is an $\epsilon'>0$ such that $|f(z)|>\epsilon'$
  for all $z\in \mD$. It follows from the corona theorem for
  $H^\infty(\mD)$ that $f$ is invertible as an element of
  $H^\infty(\mD)$.
\end{proof}

An application of Theorem \ref{thm_main} yields the
following Nyquist criterion.

\begin{corollary}
  Let $P\in \mS(H^\infty(\mD), p, m)$ and $C \in \mS(H^\infty(\mD), m,
  p)$.  Moreover, let $P=N_P D_P^{-1}$ be a right coprime
  factorization of $P$, and $C=\widetilde{D}_C^{-1} \widetilde{N}_C $
  be a left coprime factorization of $C$.  Then the following are
  equivalent:
\begin{enumerate}
\item $C$ stabilizes $P$.
\item \begin{enumerate}
\item $\det (I-CP) \in H^\infty (\mD)+C(\mT)$.
\item Let $F_1, F_2, F_3$ be the harmonic extensions to $\mD$, of 
$$
f_1:=\det  (I-CP), \quad f_2:=\det D_P, \quad f_3:=\det \widetilde{D}_C, 
$$
respectively. There exist $\delta, \epsilon>0$ such that
$$
|F_i(re^{it})| \geq \epsilon, \quad 1-\delta <r<1, \quad i=1,2,3.
$$
\item $\iota(\det
  (I-CP))+\iota(\det D_P)+\iota(\det \widetilde{D}_C)=0$.
\end{enumerate}
\end{enumerate}
\end{corollary}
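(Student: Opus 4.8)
The plan is to read the corollary off Theorem~\ref{thm_main}, applied to the triple $(R,S,\iota)=(H^\infty(\mD),\,H^\infty(\mD)+C(\mT),\,-\ind T_\bullet)$ that the preceding lemma has just shown to satisfy (A1)--(A4), by translating condition (2) of that theorem into the language of harmonic extensions. Put $f_1=\det(I-CP)$, $f_2=\det D_P$, $f_3=\det\widetilde{D}_C$. Since here $G=\mZ$ with $\star=+$ and $\circ=0$, condition (2)(b) of Theorem~\ref{thm_main}, namely $\iota(f_1)\star\iota(f_2)\star\iota(f_3)=\circ$, is word for word condition (c) of the corollary; so once we know $f_1,f_2,f_3\in\inv S$ (so that $\iota$ is defined on them) there is nothing further to do with (c). Thus the proof comes down to rewriting the clause ``$f_1,f_2,f_3\in\inv S$'', i.e.\ condition (2)(a) of Theorem~\ref{thm_main}, as the conjunction of conditions (a) and (b) of the corollary.

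For that rewriting, note first that $D_P$ and $\widetilde{D}_C$ have entries in $R=H^\infty(\mD)$, hence $f_2,f_3\in H^\infty(\mD)\subset S$ for free, and condition (a) of the corollary is exactly the remaining requirement $f_1\in S$. The key point is that for $f\in S=H^\infty(\mD)+C(\mT)$ one has
\[
f\in\inv S\iff T_f\ \text{is Fredholm}\iff \exists\,\delta,\epsilon>0\ \text{with}\ |F(re^{it})|\ge\epsilon\ \text{for}\ 1-\delta<r<1,
\]
where $F$ is the harmonic extension of $f$: the second equivalence is Proposition~\ref{prop_Dou}, and $f\in\inv S\Rightarrow T_f$ Fredholm is \cite[Corollary~7.34]{Dou}. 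Using this (and passing to a common $\delta=\min_i\delta_i$, $\epsilon=\min_i\epsilon_i$ when three separate pairs arise), condition (2)(a) of Theorem~\ref{thm_main} becomes exactly conditions (a) and (b) of the corollary; feeding ``(a)$\wedge$(b)$\wedge$(c)'' $\Leftrightarrow$ ``(2)(a)$\wedge$(2)(b) of Theorem~\ref{thm_main}'' into Theorem~\ref{thm_main} then yields the stated equivalence with ``$C$ stabilizes $P$''.

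The one implication in the displayed chain still needing an argument --- and the main obstacle --- is $T_f$ Fredholm $\Rightarrow f\in\inv S$ for $f\in H^\infty(\mD)+C(\mT)$. For symbols in $H^\infty(\mD)$ it is elementary: Proposition~\ref{prop_Dou} forces $|f|$ to be bounded below on an annulus $1-\delta<|z|<1$, so $f$ has only finitely many zeros in $\mD$; factoring off the finite Blaschke product carrying those zeros (invertible in $S$ because its boundary values are continuous) leaves a zero-free $H^\infty(\mD)$ function, which is invertible by the corona theorem, whence $f\in\inv S$. This already settles $f_2=\det D_P$ and $f_3=\det\widetilde{D}_C$. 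For $f_1=\det(I-CP)$, which a priori lies only in $H^\infty(\mD)+C(\mT)$, the corresponding statement --- Fredholmness of $T_f$ is equivalent to invertibility of $f$ in the Douglas algebra $H^\infty(\mD)+C(\mT)$ --- is classical and I would quote it from \cite{Dou}; if one prefers to stay self-contained, one can instead close the $(2)\Rightarrow(1)$ direction by index bookkeeping: from $\widetilde{G}_C G_P=\widetilde{D}_C(I-CP)D_P$ and the Toeplitz product rule (legitimate since $\det\widetilde{D}_C,\det D_P\in H^\infty(\mD)$) one gets $\ind T_{\det(\widetilde{G}_C G_P)}=\ind T_{f_1}+\ind T_{f_2}+\ind T_{f_3}$, which equals $0$ by condition (c); a Fredholm Toeplitz operator of index $0$ with analytic symbol is invertible, so $\det(\widetilde{G}_C G_P)\in\inv H^\infty(\mD)$ by the spectral inclusion and corona theorems, and then $C$ stabilizes $P$ as in the proof of Theorem~\ref{thm_main}. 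I would record the displayed equivalence as a short standalone lemma, quoting the $H^\infty(\mD)+C(\mT)$ case from \cite{Dou}, and then invoke Theorem~\ref{thm_main} directly, flagging that last quotation as the only step that relies on an external reference.
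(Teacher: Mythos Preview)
Your approach is correct and matches the paper's: the corollary is obtained by applying Theorem~\ref{thm_main} to the triple $(H^\infty(\mD),\,H^\infty(\mD)+C(\mT),\,-\ind T_\bullet)$ established in the preceding lemma, and then rewriting ``$f_i\in\inv S$'' via Proposition~\ref{prop_Dou}. The paper gives no proof beyond the sentence ``An application of Theorem~\ref{thm_main} yields the following Nyquist criterion,'' so your write-up is strictly more detailed than the paper's.

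In particular, you have correctly isolated the only point that is not immediate from the paper's stated results: the implication ``$T_f$ Fredholm $\Rightarrow f\in\inv(H^\infty(\mD)+C(\mT))$'' for $f\in H^\infty(\mD)+C(\mT)$, which is needed to pass from condition~(b) of the corollary back to condition~(2)(a) of Theorem~\ref{thm_main}. The paper cites only the forward direction from \cite[Corollary~7.34]{Dou} and leaves the converse implicit. Your two ways of handling it are both sound: quoting the full equivalence from \cite{Dou} is the cleanest route, and your alternative index-bookkeeping argument also works (note that $T_{f_3 f_1 f_2}=T_{f_1}T_{f_3}T_{f_2}$ since $f_2,f_3\in H^\infty$, and the proof of (A4) in the preceding lemma actually only uses Fredholmness of $T_f$ via Proposition~\ref{prop_Dou}, not the a priori stronger hypothesis $f\in\inv S$). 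One small remark: in your Blaschke-factorization argument for $f_2,f_3$, the corona theorem is not really needed---once $g=f/B$ is zero-free and bounded below on the annulus, it is bounded below on the compact inner disk by continuity, hence $1/g\in H^\infty(\mD)$ directly.
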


\begin{remark}
  It was proved by Inouye \cite{Ino} that the set $\mS(H^\infty(\mD),
  p, m)$ of plants possessing a left and a right coprime factorization
  coincides with the class of plants that are stabilizable. 
\end{remark}

\subsection{The polydisk algebra}
Let
\begin{eqnarray*}
\mD^n&:=& \{ (z_1, \dots, z_n)\in \mC^n: |z_i| <1 \textrm{ for } i=1,\dots, n\},\\
\overline{\mD^n}&:=& \{ (z_1, \dots, z_n)\in \mC^n: |z_i| \leq 1\textrm{ for } i=1,\dots, n\},\\
\mT^n&:=& \{ (z_1, \dots, z_n)\in \mC^n: |z_i|=1\textrm{ for } i=1,\dots, n\}.
\end{eqnarray*}
The {\em polydisk algebra} $A(\mD^n)$ is the set of all functions $f:
\overline{\mD^n} \rightarrow \mC$ such that $f$ is holomorphic in $\mD^n$
and continuous on $\overline{\mD^n}$.

If $f\in A(\mD^n)$, then the function $f_{d}$ defined by $z\mapsto
f(z,\dots, z): \overline{\mD} \rightarrow \mC$ belongs to the disk
algebra $A(\mD)$, and in particular also to $C(\mT)$. The map
$$
f\mapsto (f|_{\mT^n}, f_d): A(\mD^n) \rightarrow C(\mT^n) \times C(\mT)
$$
is a ring homomorphism. This map is also injective, and this is an
immediate consequence of Cauchy's formula; see \cite[p.4-5]{Rud69}.
We recall the following result; see \cite[Theorem~4.7.2, p.87]{Rud69}.

\begin{proposition}
\label{Rud_prop}
  Suppose that $\Psi=(\psi_1, \dots, \psi_n)$ is a continuous map from
  $\overline{\mD}$ into $\overline{\mD^n}$, which carries $\mT$ into
  $\mT^n$ and the winding number of each $\psi_i$ is positive. Then
  for every $f\in A(\mD^n)$, $f(\Psi(\overline{\mD}) \cup \mT^n)=
  f(\overline{\mD^n})$.
\end{proposition}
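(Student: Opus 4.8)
The plan is to establish the nontrivial inclusion $f(\overline{\mD^n}) \subseteq f(\Psi(\overline{\mD}) \cup \mT^n)$, the reverse being immediate since $\Psi(\overline{\mD}) \cup \mT^n \subseteq \overline{\mD^n}$. Since $f(\overline{\mD^n})$ is the closure of $f(\mD^n)$ (continuity of $f$ together with density of $\mD^n$ in $\overline{\mD^n}$) while $f(\Psi(\overline{\mD}) \cup \mT^n)$ is compact, hence closed, it suffices to prove that $f(\zeta) \in f(\Psi(\overline{\mD}) \cup \mT^n)$ for each $\zeta \in \mD^n$. Fix such a $\zeta$. If $f(\zeta) \in f(\mT^n)$ there is nothing to prove, so assume $f(\zeta) \notin f(\mT^n)$; the task is then to produce $z_0 \in \overline{\mD}$ with $f(\Psi(z_0)) = f(\zeta)$.

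First I would reduce to the case $\zeta = 0$. Let $M_\zeta \colon \overline{\mD^n} \to \overline{\mD^n}$ be the automorphism whose $j$-th component is the disc automorphism $w \mapsto (w+\zeta_j)/(1+\overline{\zeta_j}\,w)$; since $|\zeta_j|<1$ this is holomorphic on $\mD^n$, continuous on $\overline{\mD^n}$, restricts to a homeomorphism of $\mT^n$ onto itself, and sends $0$ to $\zeta$. Then $g := f \circ M_\zeta$ lies in $A(\mD^n)$ with $g(0) = f(\zeta)$, and $\Psi^\ast := M_\zeta^{-1} \circ \Psi$ again satisfies the hypotheses of the proposition: it is continuous from $\overline{\mD}$ into $\overline{\mD^n}$, carries $\mT$ into $\mT^n$, and its $j$-th component restricted to $\mT$ is $\psi_j|_{\mT}$ followed by an orientation-preserving homeomorphism of $\mT$, so its winding number is unchanged and still positive. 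Moreover $g(\Psi^\ast(\overline{\mD}) \cup \mT^n) = f(M_\zeta(M_\zeta^{-1}(\Psi(\overline{\mD})))) \cup f(M_\zeta(\mT^n)) = f(\Psi(\overline{\mD}) \cup \mT^n)$, so it is enough to prove the assertion for $g$, $\Psi^\ast$ and basepoint $0$; renaming, we may assume $\zeta = 0$ and $f(0,\dots,0) \notin f(\mT^n)$.

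The heart of the proof is a winding-number computation. Set $h(z) := f(\Psi(z)) - f(0,\dots,0)$, which is continuous on $\overline{\mD}$ and zero-free on $\mT$, because $\Psi(\mT) \subseteq \mT^n$ and $f(0,\dots,0) \notin f(\mT^n)$. Writing $k_j \geq 1$ for the positive winding number of $\psi_j|_{\mT}$, the map $\Psi|_{\mT} \colon \mT \to \mT^n$ is homotopic, componentwise (degree being a complete homotopy invariant for continuous self-maps of $\mT$), to $\Phi_0(e^{i\theta}) = (e^{ik_1\theta},\dots,e^{ik_n\theta})$; composing this homotopy with $f - f(0,\dots,0)$, which is zero-free on all of $\mT^n$, keeps us inside $\mC \setminus \{0\}$, so $h|_{\mT}$ and $(f - f(0,\dots,0)) \circ \Phi_0|_{\mT}$ have the same winding number around $0$. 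But $(f - f(0,\dots,0)) \circ \Phi_0$ is the boundary restriction of $g_0(z) := f(z^{k_1},\dots,z^{k_n}) - f(0,\dots,0)$, which lies in $A(\mD)$ because $z \mapsto (z^{k_1},\dots,z^{k_n})$ maps $\mD$ into $\mD^n$ and $\mT$ into $\mT^n$, is zero-free on $\mT$, and vanishes at $z=0$. By the argument principle the winding number of $g_0|_{\mT}$ equals the number of zeros of $g_0$ in $\mD$ counted with multiplicity, hence is at least $1$, so the winding number of $h|_{\mT}$ around $0$ is nonzero. Finally, invoking the elementary degree-theoretic fact that a continuous map $h \colon \overline{\mD} \to \mC$ which is zero-free on $\mT$ and has nonzero winding number on $\mT$ must vanish somewhere in $\mD$, we obtain $z_0 \in \mD$ with $f(\Psi(z_0)) = f(0,\dots,0)$, completing the proof.

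I expect the main obstacle to be the reduction to $\zeta = 0$: one must check that after pre- and post-composing with the polydisc automorphism the new map $\Psi^\ast$ really inherits \emph{all} the hypotheses --- in particular that applying a disc automorphism to each coordinate on $\mT$ does not destroy the positivity of the winding numbers --- and that the sets involved transform compatibly so that the two ranges coincide. Once $\zeta = 0$ is in force the remainder is the familiar homotopy-plus-argument-principle routine, whose only delicate point is that $f - f(0,\dots,0)$ must be zero-free on \emph{all} of $\mT^n$ (which is precisely the hypothesis of the case being treated), so that homotopy invariance of the winding number applies; and since $\Phi_0(0) = 0$ automatically produces a zero of $g_0$ inside $\mD$, the positivity of the winding number, and hence the existence of $z_0$, follows for free.
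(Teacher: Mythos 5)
The paper does not actually prove this proposition; it is quoted verbatim as \cite[Theorem~4.7.2, p.87]{Rud69}, so there is no ``paper's own proof'' to compare against. Evaluated on its own terms, your argument is correct, and it is essentially the argument Rudin gives: reduce to the interior value $f(\zeta)$ with $\zeta\in\mD^n$ and $f(\zeta)\notin f(\mT^n)$, center at the origin by a componentwise M\"obius automorphism, homotope $\Psi|_{\mT}$ in $\mT^n$ to the diagonal power map $\Phi_0$, and then use the argument principle on $g_0(z)=f(z^{k_1},\dots,z^{k_n})-f(0)$ together with the elementary degree fact that a continuous $\overline{\mD}\to\mC$ which is zero-free on $\mT$ with nonzero winding number must vanish in $\mD$.

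You correctly identify the point where the reduction to $\zeta=0$ is indispensable: without it, $g_0$ has no reason to have a zero in $\mD$, whereas after centering $g_0(0)=0$ is automatic. The check that the hypotheses persist under $\Psi\mapsto M_\zeta^{-1}\circ\Psi$ is also right --- a disc automorphism restricted to $\mT$ is an orientation-preserving homeomorphism, hence degree $+1$, so the composite winding numbers $k_j$ are unchanged --- and the identity $g(\Psi^\ast(\overline{\mD})\cup\mT^n)=f(\Psi(\overline{\mD})\cup\mT^n)$ holds because $M_\zeta$ is a homeomorphism of $\mT^n$ onto itself. The only place where you implicitly lean on a standard extension rather than the textbook statement is in applying the argument principle to $g_0\in A(\mD)$ (holomorphic only in $\mD$, not across $\mT$): one should say explicitly that since $g_0$ is zero-free on $\mT$ it is zero-free on an annulus $r_0<|z|\leq 1$, so the winding number of $g_0|_{r\mT}$ is eventually constant in $r$ and equals, for $r$ close to $1$, the number of zeros in $r\mD$, which is $\geq 1$ because $g_0(0)=0$. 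This is precisely the $r\nearrow 1$ device the paper itself uses in the disk-algebra subsection, so it fits the surrounding context well. No gap.
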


\begin{lemma}
  Let
\begin{eqnarray*}
  R&=& \textrm{\em a unital full subring of }A(\mD^n),\\
  S&:=& C(\mT^n) \times C(\mT), \\
  G&:=& \mZ, \\
  \iota&:=&((g, h) \mapsto {\tt w}(h)) .
\end{eqnarray*}
Then {\em (A1)-(A4)} are satisfied.
\end{lemma}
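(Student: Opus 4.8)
The plan is to verify (A1)--(A4) in turn, with essentially all the work concentrated in the `if' part of (A4). Conditions (A1) and (A2) are immediate: $R$ is a subring of $A(\mD^n)$, hence unital and commutative, and via the injective ring homomorphism $f\mapsto (f|_{\mT^n},f_d)$ recalled above we may regard $R$ as a subring of $S=C(\mT^n)\times C(\mT)$, which is a unital commutative Banach algebra under the norm $\|(g,h)\|=\max\{\|g\|_\infty,\|h\|_\infty\}$. For (A3) I would note that $(g,h)\in S$ is invertible precisely when $g\in\inv C(\mT^n)$ and $h\in\inv C(\mT)$, in which case $\iota((g,h))={\tt w}(h)$; since ${\tt w}\colon\inv C(\mT)\to\mZ$ is a group homomorphism (Subsection~\ref{subsection_disk_algebra}), so is $\iota$.

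For (A4) I would first record that, under the above identification, an element of $R\cap\inv S$ is a function $f\in R$ with $f|_{\mT^n}$ zero-free on $\mT^n$ and $f_d$ zero-free on $\mT$, and that $\iota(f)={\tt w}(f_d)$. For the `only if' direction: if $f$ is invertible in $R$ then $f^{-1}\in R\subset A(\mD^n)$, so $f$ is invertible in $A(\mD^n)$ and hence zero-free on $\overline{\mD^n}$; in particular $f_d$ is zero-free on $\overline{\mD}$, and then the one-variable argument of the disk-algebra lemma above (apply the Argument Principle to $f_r\colon z\mapsto f_d(rz)$, which has a zero-free holomorphic extension across $\mT$, and let $r\nearrow 1$ using local constancy of ${\tt w}$) gives ${\tt w}(f_d)=0$, i.e. $\iota(f)=\circ$.

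For the `if' direction I would argue as follows. Assume $f\in R\cap\inv S$ with $\iota(f)=\circ$, i.e. $f|_{\mT^n}\in\inv C(\mT^n)$, $f_d\in\inv C(\mT)$ and ${\tt w}(f_d)=0$. Running the disk-algebra argument in reverse (for $r$ near $1$, $f_r\colon z\mapsto f_d(rz)$ lies in $\inv C(\mT)$ with ${\tt w}(f_r)={\tt w}(f_d)=0$ by local constancy, so the Argument Principle forces $f_r$ zero-free on $\overline{\mD}$; letting $r\nearrow 1$ shows $f_d$ zero-free on $\mD$, and since $f_d\in\inv C(\mT)$ it is zero-free on $\overline{\mD}$) I obtain $0\notin f_d(\overline{\mD})$. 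Next I would apply Proposition~\ref{Rud_prop} to $\Psi=(\psi_1,\dots,\psi_n)\colon\overline{\mD}\to\overline{\mD^n}$ with $\psi_i(z)=z$ for all $i$: this $\Psi$ carries $\mT$ into $\mT^n$ and each $\psi_i|_{\mT}$ has winding number $1>0$, so the proposition gives $f(\overline{\mD^n})=f\big(\Psi(\overline{\mD})\cup\mT^n\big)$. Since $f(\Psi(\overline{\mD}))=f_d(\overline{\mD})$ omits $0$, and $f(\mT^n)$ omits $0$ because $f|_{\mT^n}$ is invertible in $C(\mT^n)$, it follows that $0\notin f(\overline{\mD^n})$, hence $f$ is invertible in $A(\mD^n)$; and as $R$ is a full subring of $A(\mD^n)$, $f$ is then invertible in $R$.

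The hard part is this last reduction: unlike in one variable, zero-freeness of $f$ on the distinguished boundary $\mT^n$ together with zero-freeness on the diagonal $\Psi(\overline{\mD})$ does not by itself force zero-freeness on all of $\overline{\mD^n}$, and the bridge is precisely Rudin's theorem (Proposition~\ref{Rud_prop}), which says that $\Psi(\overline{\mD})\cup\mT^n$ is a range-boundary for $A(\mD^n)$. Once that is in hand, the remaining steps are exactly the bookkeeping already carried out in the disk-algebra case.
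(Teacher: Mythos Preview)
Your proof is correct and follows essentially the same route as the paper: both verify (A4) by applying Rudin's range-boundary result (Proposition~\ref{Rud_prop}) to the diagonal embedding $\Psi(z)=(z,\dots,z)$ and reducing to the one-variable disk-algebra criterion for $f_d$. Your write-up is in fact a bit more careful, making explicit both that $f(\mT^n)$ omits $0$ and that fullness of $R$ in $A(\mD^n)$ is used at the end.
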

\begin{proof} (A1) and (A2) are clear. (A3) was proved earlier in
  Subsection~\ref{subsection_disk_algebra}. Finally, we will show
  below that (A4) holds, following \cite{DecMurSae}.

  Suppose that $f\in A(\mD^n)$ is such that $f|_{\mT^n} \in \inv
  C(\mT^n)$, $f_d \in \inv C(\mT)$ and that ${\tt w}(f_d)=0$. We use
  Proposition~\ref{Rud_prop}, with $\Psi(z):=(z,\dots, z)$ ($z\in
  \overline{\mD}$). Then we know that $f$ will have no zeros in
  $\overline{\mD^n}$ if $f(\Psi (\overline{\mD}))$ does not contain
  $0$. But since $f_d \in \inv C(\mT)$ and ${\tt w}(f_d)=0$, it
  follows that $f_d$ is invertible as an element of $A(\mD)$ by the
  result in Subsection~\ref{subsection_disk_algebra}. But this implies
  that $f(\Psi (\overline{\mD}))$ does not contain $0$.

  Now suppose that $f\in A(\mD^n)$ with $f|_{\mT^n} \in \inv
  C(\mT^n)$, $f_d \in \inv C(\mT)$, and that it is invertible as an
  element of $A(\mD^n)$. But then in particular, $f_d$ is an
  invertible element of $A(\mD)$, and so again by the result in
  Subsection~\ref{subsection_disk_algebra}, it follows that ${\tt
    w}(f_d)=0$.
\end{proof}

\noindent Besides $A(\mD^n)$ itself, some other examples of such $R$
are:
\begin{enumerate}
\item $\calP$, the set of all polynomials $p:\mC^n\rightarrow \mC$,
\item $RH^\infty(\mD^n)$, the set of all rational functions without
  poles in $\overline{\mD^n}$.
\end{enumerate}

\noindent An application of our main result (Theorem \ref{thm_main})
yields the following Nyquist criterion.

\begin{corollary}
\label{corollary_nyquist_for_polydisk}
  Let $R$ be a unital full subring of $A(\mD^n)$. Let $P\in \mS(R,
  p, m)$ and $C \in \mS(R, m, p)$. Moreover, let $P=N_P D_P^{-1}$ be
  a right coprime factorization of $P$, and $C= \widetilde{D}_C^{-1}  \widetilde{N}_C $ be a
  left coprime factorization of $C$. Then the following are
  equivalent:
\begin{enumerate}
\item $C$ stabilizes $P$.
\item
\begin{enumerate}
\item $\det (I-CP)$, $\det D_P$, $\det \widetilde{D}_C$ belong to
  $\textrm{\em inv }(C(\mT^n)\times C(\mT))$, and
\item $\iota(\det  (I-CP))+\iota(\det D_P)+ \iota(\det \widetilde{D}_C)=0$.
\end{enumerate}
\end{enumerate}
\end{corollary}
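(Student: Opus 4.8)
The plan is to read off the Corollary as the concrete instance of Theorem~\ref{thm_main} attached to the triple $(R,S,\iota)$ with $S=C(\mT^n)\times C(\mT)$, $G=\mZ$ and $\iota=((g,h)\mapsto {\tt w}(h))$. The preceding Lemma already checks that this triple satisfies (A1)--(A4), so Theorem~\ref{thm_main} applies word for word to the given coprime factorizations $P=N_PD_P^{-1}$ and $C=\widetilde{D}_C^{-1}\widetilde{N}_C$; the entire task is then to rephrase conditions (2)(a) and (2)(b) of that theorem in the notation of the polydisk algebra.

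First I would fix the identifications. Since $R$ is a full subring of $A(\mD^n)$, the injective ring homomorphism $f\mapsto (f|_{\mT^n},f_d)$ of $A(\mD^n)$ into $C(\mT^n)\times C(\mT)$ realises $R\subset S$; under it the matrices $D_P$ and $\widetilde{D}_C$ (entries in $R$) become matrices over $S$, so $\det D_P,\det\widetilde{D}_C$ are genuine elements of $S$, and their membership in $\inv S$ is exactly the requirement that they vanish nowhere on $\mT^n$ (first coordinate) and nowhere on $\mT$ (second coordinate). For $\det(I-CP)$, a priori only an element of $\mF(R)$, I would argue exactly as in the proof of Theorem~\ref{thm_main}: from $\widetilde{G}_CG_P=\widetilde{D}_C(I-CP)D_P$ one gets $\det(\widetilde{G}_CG_P)=(\det\widetilde{D}_C)(\det(I-CP))(\det D_P)$ with $\det(\widetilde{G}_CG_P)\in R\subset S$, so once $\det D_P,\det\widetilde{D}_C\in\inv S$ the factor $\det(I-CP)$ is well-defined in $S$, and condition (2)(a) of Theorem~\ref{thm_main} is precisely clause (2)(a) of the Corollary.

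Then I would unwind the abstract index. The group is $(G,\star)=(\mZ,+)$ with identity $\circ=0$, and for an invertible pair $(g,h)\in S$ one has $\iota((g,h))={\tt w}(h)$; hence the abstract equation $\iota(\det(I-CP))\star\iota(\det D_P)\star\iota(\det\widetilde{D}_C)=\circ$ of Theorem~\ref{thm_main} is literally the additive relation in clause (2)(b) of the Corollary. Assembling these two translations with the equivalence (1)$\Leftrightarrow$(2) of Theorem~\ref{thm_main} gives the stated result.

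I do not anticipate a real obstacle: all the mathematical substance sits upstream, in the verification of (A1)--(A4) for this triple (the preceding Lemma, which itself rests on Rudin's Proposition~\ref{Rud_prop} and the disk-algebra computation of Subsection~\ref{subsection_disk_algebra}) and in Theorem~\ref{thm_main}. The only point needing a little care is the bookkeeping of the three layers $R\subset A(\mD^n)\subset C(\mT^n)\times C(\mT)$ and the observation---common to every corollary of Section~\ref{section_applications}---that the clause ``$\det(I-CP)$ belongs to $\inv S$'' already carries the information that this determinant, which starts life as a fraction, in fact lands in $S$.
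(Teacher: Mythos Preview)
Your proposal is correct and matches the paper's approach exactly: the paper simply states that the corollary follows from ``an application of our main result (Theorem~\ref{thm_main})'' once the preceding Lemma has verified (A1)--(A4) for the triple $(R,\,C(\mT^n)\times C(\mT),\,\iota)$. Your additional bookkeeping about the embedding $R\hookrightarrow S$ and the status of $\det(I-CP)$ is accurate and a bit more careful than the paper, but not a different argument.
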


\begin{remark}
  By \cite{BruSas}, it follows that $A(\mD^n)$ is a projective free
  ring, since its maximal ideal space the polydisk $\overline{\mD^n}$
  is contractible.  Thus the set $\mS(A(\mD^n), p, m)$ of plants
  possessing a left and a right coprime factorization coincides with
  the class of plants that are stabilizable by
  \cite[Theorem~6.3]{Qua}.

  Corollary \ref{corollary_nyquist_for_polydisk} was known in the special
  case when $R=\calP$; see \cite{DecMurSae}.
\end{remark}

\noindent {\bf Acknowledgement:} I would like to thank Alban Quadrat
for mentioning to me the problem of obtaining a Nyquist criterion for
infinite dimensional control systems, for the references \cite{CalDes}
and \cite{Dav}, and for a discussion on this area.

\end{document}